\documentclass[10pt]{amsart}
 \usepackage[utf8]{inputenc} 
\usepackage{csquotes}

\usepackage{amsfonts}
\usepackage{amssymb}
\usepackage{graphicx,ulem,hyperref}
\usepackage{pstricks}
\usepackage{amsmath}
\usepackage{amsxtra}

\theoremstyle{plain}
\newtheorem{theorem}{Theorem}[section]
\newtheorem{lemma}[theorem]{Lemma}

\newtheorem{conjecture}[theorem]{Conjecture}

\newtheorem{problem}[theorem]{Problem}

\theoremstyle{definition}

\newtheorem{example}[theorem]{Example}
\newtheorem{remark}[theorem]{Remark}

\numberwithin{equation}{subsection} 

\begin{document}

\title[Hyponormal block Toeplitz operators with finite rank self-commutators]{Hyponormal block Toeplitz operators \\ with finite rank self-commutators}


\author[M. Abhinand]{Mankunikuzhiyil Abhinand}
\address{Department of Mathematics,University of Calicut,	Kerala-673635, India; and Indian Statistical Institute, Statistics and Mathematics Unit, 8th Mile, Mysore Road, Bangalore, 560059, India.}
\email{abhinandmkrishnan@gmail.com}

\author[R.E. Curto]{Ra\'ul E. Curto}
\address{Department of Mathematics, The University of Iowa, Iowa City, Iowa, U.S.A.}
\email{raul-curto@uiowa.edu}

\author[T. Prasad]{Thankarajan Prasad}
\address{Department of Mathematics, University of Calicut,	Kerala-673635, India}
\email{prasadvalapil@gmail.com}


\begin{abstract}
In this paper we identify a large class of hyponormal block Toeplitz operators whose self-commutators are of finite rank. \ Recall that an operator $T_\varphi$ is hyponormal and $[T_\varphi^{\ast}, T_\varphi]$ is a finite rank operator if and only if there exists a finite Blaschke product $b$ in $\mathcal{E}(\varphi)$, where 
$$
\mathcal{E}(\varphi) := \{k \in H^\infty(\mathbb{T}): \left\|k\right\|_\infty \le 1 \textrm{ and } \varphi-k\cdot \bar{\varphi} \in H^\infty(\mathbb{T}) \}.
$$
An analogous set $\mathcal{E}(\Phi)$ can be defined for a matrix-valued symbol $\Phi$. \ In the block Toeplitz operator case, we first establish that if a symbol $\Phi$ is in $L^\infty(\mathbb{T},M_n)$ and if $\mathcal{E}(\Phi)$ contains a constant unitary matrix $U$, then $T_\Phi$ is normal. \ We then obtain a suitable converse, under a mild assumption on the symbol. \ Next, we provide a partial answer to a conjecture recently posed by R.E. Curto, I.S. Hwang and W.Y. Lee \cite[Conjecture 6.1]{CHL}. \ Concretely, assume that $\Phi \in H^{\infty}(\mathbb{T}, M_n)$ is such that $\Phi^{\ast}$ is of bounded type and $T_\Phi$ is hyponormal. \ Then $[T_\Phi^{\ast}, T_\Phi]$ is a finite rank operator if and only if there exists a finite Blaschke–Potapov product in $\mathcal{E}(\widetilde{\Phi})$, where $ \widetilde\Phi:=\breve{\Phi}^*$ \; and \; $\breve{\Phi}(e^{i\theta}):=\Phi(e^{-i\theta})$. 

\vspace{5pt}

\textbf{Keywords}: Subnormal operators, Toeplitz operators, block Toeplitz operators, Blaschke-Potapov products, Nakazi-Takahashi Theorem.

\vspace{5pt}
\noindent
\textbf{2020 Mathematics Subject Classification:} Primary 47B20; Secondary 47A10.

\end{abstract}
\maketitle

\pagestyle{myheadings}


\section{Introduction}
In the setting of a Hilbert space $\mathcal{H}$, we denote by $\mathcal{B}(\mathcal{H})$  the algebra of all bounded linear operators on $\mathcal{H}$. \ An operator $T \in \mathcal{B}(\mathcal{H})$ is said to be \textit{self-adjoint} if it satisfies $T=T^{\ast},$ where $T^{\ast}$ is its adjoint. \  The self-commutator of $T$ is given by $[T^{\ast}, T] := T^{\ast} T - T T^{\ast}$. \ An operator $T$ is called \textit{normal} if $[T^{\ast}, T] = 0$, whereas it is called \textit{hyponormal} if $[T^{\ast}, T] \geq 0$. \ A particularly notable subclass of hyponormal operators is the class of \textit{subnormal operators}, which are those that can be extended to normal operators. \ Specifically, $T\in \mathcal{B}(\mathcal{H})$  is called \textit{subnormal} if there exists a Hilbert space $\mathcal{K} \supseteq \mathcal{H}$  and a normal operator $N$ on $\mathcal{B}(\mathcal{K})$ such that $N |_{\mathcal{H}} = T$  and $N \mathcal{H} \subseteq \mathcal{H}$. \ The class of subnormal operators, introduced by P.R. Halmos \cite{HAL0}, has been the focus of extensive research over many decades, providing deep insights into functional analysis and operator theory.

Research on normal and hyponormal operators has an extensive range of applications across various fields including mathematics, mathematical physics and machine learning (see \cite{IFA}, for instance). \ The theory of subnormal operators has found extensive applications in various fields, including analytic function theory, differential geometry, approximation theory and quantum mechanics \cite{HS, SZA}. \ In particular, subnormal operators with finite rank self-commutator have been one of the interesting topics in functional analysis and operator theory. \ The finite rank condition suggests that the operator is nearly normal, thereby simplifying the spectral analysis. \ In 1973, B.B. Morrel \cite{MOR} established that any subnormal operator whose self-commutator is of rank one can be written as a linear combination of the unilateral shift operator and the identity operator. \ Later in 1987, D. Xia \cite{XIA,XIA1} made an attempt to classify all subnormal operators with finite rank self-commutator. \ In 1998, J.B. Conway and L. Yang \cite{CY} posed the following open problem:

\begin{center}
{\it Classify all subnormal operators whose self-commutator has finite rank.}
\end{center}

In the major part of this work, we address this question in the framework of block Toeplitz operators in conjunction with a conjecture recently posed by R.E. Curto, I.S. Hwang and W.Y. Lee \cite[Conjecture 6.1]{CHL}.

Let $\mathbb{T}$ denote the unit circle in the complex plane. \ Let $L^2(\mathbb{T})$ and $L^{\infty}(\mathbb{T})$ denote the Hilbert space of all absolutely square--integrable functions and the space of all essentially bounded measurable functions, respectively, defined on $\mathbb{T}$ with respect to the normalized-Lebesgue measure. \ By $H^2(\mathbb{T})$ we mean the Hardy space corresponds to $L^2(\mathbb{T})$ and $H^{\infty}(\mathbb{T}): = L^{\infty}(\mathbb{T}) \bigcap H^2(\mathbb{T}).$ A function $\theta \in H^{\infty}(\mathbb{T})$ is called \textit{inner}
if $|\theta| = 1$ a.e. on $\mathbb{T}$. \ A function $\phi \in L^{\infty}$ is of \textit{bounded type} if $\phi = \frac{\phi_1}{\phi_2}$ for $\phi_1, \phi_2 \in H^{\infty}(\mathbb{T})$. \ If $\phi \in L^{\infty}(\mathbb{T})$, then the \textit{Toeplitz operator} with symbol $\phi$ on the Hardy space $H^2(\mathbb{T})$ is denoted as $T_\phi$ and is defined by
$$
T_\phi f := P(\phi f),
$$
where $f \in H^2(\mathbb{T})$ and $P$ is the orthogonal projection of $L^2(\mathbb{T})$ onto $H^2(\mathbb{T})$. \ A Toeplitz operator $T_\phi$ is called \textit{analytic} if $\phi \in H^{\infty}(\mathbb{T})$ and \textit{coanalytic} if $\overline{\phi} \in H^{\infty}(\mathbb{T}).$

The characterization of classes of Toeplitz operators based on the properties of their symbols is an interesting and exciting problem in operator theory. \ Normality of Toeplitz operators was characterized by A. Brown and P.R. Halmos \cite{BH}, while C. Cowen \cite{CC} gave a characterization for their hyponormality.

On the other hand, it seems quite difficult to determine the subnormality of Toeplitz operators based on the properties of their symbols. \  Moreover, P.R. Halmos \cite{HAL, HAL1} posed the following problem:
\begin{center}
\textit{``Is every subnormal Toeplitz operator either normal or analytic ?"}
\end{center}
In 1984, C. Cowen and J. Long \cite{CL} gave an example of a subnormal Toeplitz operator which is neither normal nor analytic and answered this problem negatively. \ Yet, a complete characterization of subnormal Toeplitz operators by the properties of their symbols still remains an open problem. \  This naturally gives rise to the following problems:
\begin{center}
\textit{``Which Toeplitz operators are subnormal?"} 
\end{center}

\begin{center}
\textit{``Which subnormal Toeplitz operators are either normal or analytic?"}
\end{center}
There are some partial answers to Halmos' problem as follows:

\vspace{0.25cm}
\noindent {\bf Abrahamse's Theorem} (\cite[Theorem]{AB}). \ Let
$\varphi\in L^\infty$ such that $\varphi$ or $\overline\varphi$ is
of bounded type. \  If $T_\varphi$ is hyponormal and $\ker
[T_\varphi^*, T_\varphi]$ is invariant under $T_\varphi$, then
$T_\varphi$ is normal or analytic. \ In particular, if
$T_\varphi$ is a subnormal operator then $T_\varphi$ is either
normal or analytic.

\vspace{0.25cm}
\noindent{\bf Nakazi-Takahashi Theorem} (\cite[Theorem 15]{NT}). \ If
$T_\varphi$ is subnormal and $\varphi = q \overline \varphi$, where $q$ is a finite Blaschke product, then $T_\varphi$  is either normal or analytic.

\vspace{0.25cm}
\noindent{\bf Extension of Nakazi-Takahashi Theorem} (\cite[Theorem 4.2]{ACHLP1}). \  Let $T_\varphi$ be subnormal and assume that $\varphi = q \overline{\varphi} + g \in L^{\infty}(\mathbb{T})$, where $q$ is finite Blaschke product and $g \in H^{\infty}(\mathbb{T})$. \ If $gH^2 \subseteq \text{Ran }T_\varphi,$ then $T_\varphi$ is normal or analytic.

\vspace{0.25cm}
In 1993, T. Nakazi and K. Takahashi characterized hyponormal Toeplitz operators with finite rank self-commutator as follows:

\vspace{0.25cm}
\noindent{\bf Hyponormal Toeplitz operators with finite rank self-commutator}(\cite[Theorem 10]{NT}). \ An operator $T_\varphi$ is hyponormal and $[T_\varphi^{\ast}, T_\varphi]$ is a finite rank operator if and only if there exists a finite Blaschke product $b$ in $\mathcal{E}(\varphi)$, where 
$$
\mathcal{E}(\varphi) := \{k \in H^\infty(\mathbb{T}): \left\|k\right\|_\infty \le 1 \textrm{ and } \varphi-k\cdot \bar{\varphi} \in H^\infty(\mathbb{T}) \}.
$$
We can then choose $b$ such that the degree of $b = \text{rank }[T_\varphi^{\ast}, T_\varphi].$

\vspace{0.25cm}
In this paper, we discuss the validity of the above theorem in the block Toeplitz operator setting, which is related to a conjecture recently posed by R.E. Curto, I.S. Hwang and W.Y. Lee \cite{CHL}. \ In the case of block Toeplitz operators, the appropriate generalization of $\mathcal{E}(\varphi)$ is
$$
\mathcal{E}(\Phi) := \{K \in H^\infty(\mathbb{T},M_n): \left\|K\right\|_\infty \le 1 \textrm{ and } \Phi-K\Phi^* \in H^\infty(\mathbb{T},M_n) \}.
$$
Let $I_n$ be the $n \times n$ identity matrix, $M_{n\times m}$
denote the set of all $n\times m$ complex matrices and $M_n\equiv
M_{n\times n}$. \  As denote by $L^2(\mathbb{T}, \mathbb{C}^n)$, $H^2(\mathbb{T}, \mathbb{C}^n)$ and $L^\infty(\mathbb{T}, \mathbb{C}^n)$ the set of all $\mathbb C^n$-valued
Lebesgue square integrable functions on $\mathbb T$, the associated
Hardy space and the set of all $M_n$-valued essentially bounded
functions on $\mathbb T$, respectively. \ For a function $\Phi\in
L^\infty(\mathbb{T}, \mathbb{C}^n)$, the block Toeplitz operator $T_\Phi$ with symbol
$\Phi$ is defined by $T_\Phi f :=P_n(\Phi f)$ for $f\in H^2(\mathbb{T}, \mathbb{C}^n)$, where $P_n$ is the orthogonal projection of $L^2(\mathbb{T}, \mathbb{C}^n)$ onto $H^2(\mathbb{T}, \mathbb{C}^n)$. \ Similarly, a block Hankel operator $H_\Phi$ with symbol $\Phi\in L^\infty(\mathbb{T}, \mathbb{C}^n)$ is defined by $H_\Phi
f:=J_n P_n^\perp(\Phi f)$ for $f\in H^2(\mathbb{T}, \mathbb{C}^n)$, where $P_n^{\perp}$ is the orthogonal projection of $L^2(\mathbb{T}, \mathbb{C}^n)$
onto $(H^2(\mathbb{T}, \mathbb{C}^n))^{\perp}\equiv L^2(\mathbb{T}, \mathbb{C}^n)\ominus
H^2(\mathbb{T}, \mathbb{C}^n)$ and $J_n$ is the unitary operator from $L^2(\mathbb{T}, \mathbb{C}^n)$ onto $L^2(\mathbb{T}, \mathbb{C}^n)$ defined by
$J_n(f)(e^{i\theta}):=e^{-i\theta}I_n f(e^{-i\theta})$ for $f\in L^2(\mathbb{T}, \mathbb{C}^n)$. \ If we write $H^2(\mathbb{T}, \mathbb{C}^n) = H^2(\mathbb T)\oplus\cdots\oplus H^2(\mathbb T)$, then we can easily see that
$$
T_\Phi=\begin{bmatrix}T_{\phi_{11}}&\hdots&T_{\phi_{1n}}\\
	&\vdots\\
	T_{\phi_{n1}}&\hdots&T_{\phi_{nn}}
\end{bmatrix}\quad\hbox{and}\quad
H_\Phi=\begin{bmatrix}H_{\phi_{11}}&\hdots&H_{\phi_{1n}}\\
	&\vdots\\
	H_{\phi_{n1}}&\hdots&H_{\phi_{nn}}
\end{bmatrix},
$$
where
$$
\Phi=\begin{bmatrix}{\phi_{11}}&\hdots& {\phi_{1n}}\\
	&\vdots\\
	{\phi_{n1}}&\hdots&{\phi_{nn}}
\end{bmatrix}.
$$
For a function $\Phi\in L^\infty(\mathbb{T}, M_{n\times m})$, write
$$
\breve{\Phi}(e^{i\theta}):=\Phi(e^{-i\theta}) \quad \hbox{and} \quad
\widetilde\Phi:=\breve{\Phi}^*.
$$
A function $\Phi\in L^\infty(\mathbb{T}, M_{n\times m})$ is of bounded type if each of its entries is of bounded type. \ A function $\Theta\in H^\infty(\mathbb{T}, M_{n\times m})$ is called an inner function if $\Theta^*\Theta=I_m$ a.e. on $\mathbb T$. \  The following basic properties for Toeplitz and Hankel operators are implicitly used in the sequel:
\begin{equation}\label{basic}
	\aligned
	&T_\Phi^*=T_{\Phi^*},\ \  H_\Phi^*= H_{\widetilde \Phi} \quad
	(\Phi\in
	L^\infty(\mathbb{T}, \mathbb{C}^n));\\
	&T_{\Phi\Psi}-T_\Phi T_\Psi = H_{\Phi^*}^*H_\Psi \quad
	(\Phi,\Psi\in L^\infty(\mathbb{T}, \mathbb{C}^n));\\
	&H_\Phi T_\Psi = H_{\Phi\Psi},\ \
	H_{\Psi\Phi}=T_{\widetilde{\Psi}}^*H_\Phi \quad (\Phi\in
	L^\infty(\mathbb{T}, \mathbb{C}^n), \Psi\in H^\infty(\mathbb{T}, \mathbb{C}^n));\\
	&H_\Phi^* H_\Phi - H_{\Theta \Phi}^* H_{\Theta\Phi} =H_\Phi^*
	H_{\Theta^*}H_{\Theta^*}^*H_\Phi \quad (\Theta\in H^\infty(\mathbb{T}, \mathbb{C}^n)
	\ \hbox{inner,}  \ \Phi\in L^\infty(\mathbb{T}, \mathbb{C}^n)).
	\endaligned
\end{equation}
For a function $\Phi\in L^2(\mathbb{T}, \mathbb{C}^n)$, we write

$$
\Phi_+:=\mathbb{P}_n (\Phi)\in H^2(\mathbb{T}, \mathbb{C}^n) \quad \hbox{and} \quad
\Phi_-:=[\mathbb{P}_n^{\perp} (\Phi)]^*\in zH^2(\mathbb{T}, \mathbb{C}^n),
$$
where $\mathbb{P}_n$ and $\mathbb{P}_n^{\perp}$ denote the
orthogonal projections form $L^2(\mathbb{T}, M_n)$ onto $H^2(\mathbb{T}, M_n)$ and
$(H^2(\mathbb{T}, M_n))^{\perp}\equiv L^2(\mathbb{T},M_n) \ominus H^2(\mathbb{T},M_n)$, respectively. \ Then we may write
$$
\Phi=\Phi_-^*+\Phi_+.
$$
For $\Phi\in L^\infty(\mathbb{T},M_n)$, the following identities hold:
$$
P_nJ_n=J_nP_n^\perp, \quad P_n^\perp J_n=J_n P_n  \quad \hbox{and}
\quad J_n M_{\Phi}=M_{\breve{\Phi}}J_n.
$$
An $n\times n$ matrix-valued function $Q$ is called a {\it finite
	Blaschke-Potapov product} if $Q$ is of the form
$$
Q(z)=v\prod_{m=1}^M (b_m(z)R_m+(I-R_m)),
$$
where $v$ is an $n\times n$ unitary constant matrix, $b_m$ is a {\it
	Blaschke factor} of the form
$$
b_m(z)=\frac{z-\alpha_m}{1-\overline\alpha_m z}\ \
(\alpha_m\in\mathbb D)
$$
and $R_m$ is an orthogonal projection in $\mathbb  C^n$.

\section{Hyponormality of block Toeplitz operators with Trigonometric  polynomial symbol}
In this section, we examine certain properties of hyponormal block Toeplitz operators whose symbols are trigonometric polynomials. \ In the case of scalar-valued symbols, it has been observed that if the symbol $\varphi \in L^{\infty}$ of a Toeplitz operator $T_{\varphi}$ is a trigonometric polynomial, then the self-commutator $[T_{\varphi}^{\ast}, T_{\varphi}]$ is of finite rank. \ To see this, let $\varphi (e^{i\theta}) = \sum_{n=-k}^m a_n e^{in\theta} \in L^{\infty}$ be a trigonometric polynomial, where $k,m \in \mathbb{N} \cup \{0\}$. \ Recall that for $\varphi, \psi \in L^{\infty}$, the identity
$$T_{\varphi \psi} - T_\varphi T_\psi = H_{\overline{\varphi}}^{\ast} H_\psi$$
holds. \ Using this, we obtain 
$$
T_{\varphi}^{\ast} T_{\varphi} = T_{\overline{\varphi} \varphi} - H_{\varphi}^{\ast} H_{\varphi}\quad \text{and} \quad T_{\varphi} T_{\varphi}^{\ast} = T_{\varphi \overline{\varphi}} - H_{\overline{\varphi}}^{\ast} H_{\overline{\varphi}},
$$
which shows that
$$
[T_\varphi^{\ast}, T_\varphi] = T_{\overline{\varphi} \varphi} - T_{\varphi \overline{\varphi}} + H_{\overline{\varphi}}^{\ast} H_{\overline{\varphi}} - H_{\varphi}^{\ast} H_{\varphi} = H_{\overline{\varphi}}^{\ast} H_{\overline{\varphi}} - H_{\varphi}^{\ast} H_{\varphi}.
$$
Since $\varphi$ is a trigonometric polynomial, the Hankel operators $H_{\overline{\varphi}}^{\ast}, H_{\overline{\varphi}}, H_{\varphi}^{\ast}, H_{\varphi}$ are all of finite rank. \ Consequently, the self-commutator $[T_{\varphi}^{\ast}, T_{\varphi}]$ is a finite rank operator.

However, this result does not generally extend to the case of block Toeplitz operators. \ Consider the matrix-valued trigonometric polynomial symbol 
{\setlength{\belowdisplayskip}{0.35cm}
	\begin{eqnarray*}
		\Phi (e^{i\theta}) :=
		\begin{bmatrix}
			e^{i\theta} & 1\\
			0 & e^{-i\theta}
		\end{bmatrix} \in L^{\infty} (\mathbb{T}, M_2).
\end{eqnarray*}}%
In this case, the self-commutator $[T_{\Phi}^{\ast}, T_{\Phi}]$ of the block Toeplitz operator $T_\Phi$ is infinite dimensional. \ To illustrate this, observe the following calculation:
{\setlength{\belowdisplayskip}{0.3cm}
	\begin{align*}
		[T_{\Phi}^{\ast}, T_{\Phi}] &= 
		\begin{bmatrix}
			T_{e^{-i\theta}} & 0\\
			I & T_{e^{i\theta}}
		\end{bmatrix}
		\begin{bmatrix}
			T_{e^{i\theta}} & I\\
			0 & T_{e^{-i\theta}}
		\end{bmatrix} - 
		\begin{bmatrix}
			T_{e^{i\theta}} & I\\
			0 & T_{e^{-i\theta}}
		\end{bmatrix}
		\begin{bmatrix}
			T_{e^{-i\theta}} & 0\\
			I & T_{e^{i\theta}}
		\end{bmatrix}\\[12pt]
		& = 
		\begin{bmatrix}
			- T_{e^{i\theta}} T_{e^{-i\theta}} & T_{e^{-i\theta}} - T_{e^{i\theta}}\\
			T_{e^{i\theta}} - T_{e^{-i\theta}} & T_{e^{i\theta}} T_{e^{-i\theta}}
		\end{bmatrix}.
\end{align*}}%
Therefore, the range of  $[T_{\Phi}^{\ast}, T_{\Phi}]$ is infinite dimensional.

In general, the following identity holds for a trigonometric polynomial symbol $\Phi \in L^{\infty} ( \mathbb{T}, M_n):$
{\setlength{\belowdisplayskip}{0.1cm}
	\begin{eqnarray*}
		[T_{\Phi}^{\ast}, T_{\Phi}] = T_{\Phi^{\ast} \Phi - \Phi \Phi^{\ast}} + H_{\Phi^{\ast}}^{\ast} H_{\Phi^{\ast}} - H_{\Phi}^{\ast} H_{\Phi}.
\end{eqnarray*}}%
Hence, for a normal matrix-valued symbol $\Phi$, we obtain
{\setlength{\belowdisplayskip}{0.15cm}
	\begin{eqnarray*}
		[T_{\Phi}^{\ast}, T_{\Phi}] =  H_{\Phi^{\ast}}^{\ast} H_{\Phi^{\ast}} - H_{\Phi}^{\ast} H_{\Phi}.
\end{eqnarray*}}%
Since the Hankel operators $H_{\Phi^{\ast}}^{\ast}, H_{\Phi^{\ast}}, H_{\Phi}^{\ast}, H_{\Phi}$ are of finite rank, it follows that for a  normal matrix-valued trigonometric polynomial $\Phi$, the self-commutator $[T_{\Phi}^{\ast}, T_{\Phi}]$ is a finite rank operator. \ In particular, if $T_\Phi$ is hyponormal, with trigonometric polynomial symbol, then $[T_{\Phi}^{\ast}, T_{\Phi}]$ is a positive semi-definite finite rank operator. \ This observation motivates a detailed examination of hyponormal block Toeplitz operators with trigonometric polynomial symbols. 

C. Gu, J. Hendricks and D. Rutherford \cite{GHR} gave a characterization for hyponormal block Toeplitz operators. \ Also, they provided additional insights about the symbol of hyponormal block Toeplitz operators when the symbol is a matrix trigonometric polynomial \cite{GHR}. \ Here, we add some more properties of  hyponormal block Toeplitz operators with trigonometric polynomial symbol by developing the methods of R.E. Curto and W.Y. Lee \cite{CuLe}.

\begin{theorem}\label{C3S1TH2} 
	For $m, N \in \mathbb{N} \cup \{0\}$, suppose that $\Phi \in L^{\infty} (\mathbb{T}, M_n)$ is a trigonometric polynomial of the form $\Phi (e^{i\theta}) = \sum_{j=-m}^{N} A_j e^{ij\theta} I_n$, with $A_N$ is invertible. \ Then, the hyponormality of $T_\Phi$ is independent of the matrices $A_0, A_1, \ldots A_{N-m}$.
\end{theorem}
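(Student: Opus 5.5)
The plan is to reduce hyponormality to a matrix Carathéodory–Fejér interpolation problem, following Curto and Lee \cite{CuLe}, and then check which coefficients enter the interpolation data. Write $\Phi=\Phi_-^*+\Phi_+$ with
$$
\Phi_+=\sum_{j=0}^N A_j z^j, \qquad \Phi_-^*=\sum_{j=1}^m A_{-j} z^{-j}.
$$
Changing $A_0,A_1,\ldots ,A_{N-m}$ changes only $\Phi_+$ (note $N-m\ge 0$ is implicit), and leaves $\Phi_-$ fixed. I will use the characterization of Gu, Hendricks and Rutherford \cite{GHR}: $T_\Phi$ is hyponormal if and only if $\Phi$ is normal and $\mathcal{E}(\Phi)\neq\emptyset$. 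The proof then splits into two independence claims, one for $\mathcal{E}(\Phi)\ne\emptyset$ and one for normality.

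\emph{Step 1: $\mathcal{E}(\Phi)\neq\emptyset$.} Since $\Phi_+\in H^\infty$ and $K\Phi_-\in H^\infty$, the condition $\Phi-K\Phi^*\in H^\infty$ is equivalent to $\mathbb{P}_n^\perp(\Phi_-^*-K\Phi_+^*)=0$. Write $K=\sum_{i\ge 0}K_i z^i$. Comparing the coefficients of $z^{-l}$ for $l=1,\ldots ,N$ gives
$$
\sum_{j=l}^{N} K_{j-l}\,A_j^* = A_{-l}\quad (1\le l\le m), \qquad \sum_{j=l}^{N} K_{j-l}\,A_j^*=0 \quad (m<l\le N).
$$
This system is triangular in $K_0,\ldots ,K_{N-1}$ with diagonal coefficient $A_N^*$, which is invertible. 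Solving from $l=N$ downwards, the homogeneous equations force $K_0=\cdots =K_{N-m-1}=0$. In the remaining equations ($l\le m$), the only nonzero unknowns are $K_{j-l}$ with $j-l\ge N-m$, that is $j\ge N-m+l\ge N-m+1$. Hence $K_0,\ldots ,K_{N-1}$ are uniquely determined by $A_{-1},\ldots ,A_{-m}$ and $A_{N-m+1},\ldots ,A_N$ alone. Consequently $\mathcal{E}(\Phi)\ne\emptyset$ if and only if the matrix Carathéodory–Fejér problem with these prescribed first $N$ Taylor coefficients has a contractive $H^\infty$ solution. This is a condition (for instance, that the associated block lower-triangular Toeplitz matrix is a contraction) which does not involve $A_0,\ldots ,A_{N-m}$.

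\emph{Step 2: normality of $\Phi$.} This is the step I expect to be the main obstacle, because pointwise normality of $\Phi(e^{i\theta})$ can a priori involve every coefficient. The first thing I would try is to show that when $A_N$ is invertible and $\mathcal{E}(\Phi)\ne\emptyset$, normality is automatic, or is forced by the same data as in Step 1.

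Concretely, take $K\in\mathcal{E}(\Phi)$ and put $\Phi=K\Phi^*+F$ with $F\in H^\infty$. Expanding $\Phi^*\Phi-\Phi\Phi^*$ as a trigonometric polynomial, I would compare its coefficients of $z^{k}$ for $k$ near $\pm(N+m)$, where only extreme coefficients occur. I would then use the triangular relations of Step 1, together with invertibility of $A_N$, to show that the vanishing of the remaining coefficients can be rewritten in terms of $K_0,\ldots ,K_{N-1}$, $A_{-j}$ and $A_{N-m+1},\ldots ,A_N$.

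If this does not close, the fallback is the alternative form of the criterion in \cite{GHR}: positivity of
$$
T_{\Phi^*\Phi-\Phi\Phi^*}+H_{\Phi^*}^*H_{\Phi^*}-H_{\Phi}^*H_{\Phi}.
$$
Here $H_{\Phi^*}=H_{\Phi_+^*}$, and by the identity $H_{\Theta\Phi}$-formulas in \eqref{basic}, the Hankel part of $\Phi_+^*$ depends on $A_0,\ldots ,A_{N-m}$ only through a finite-rank piece. I would try to show that this piece is annihilated on the relevant subspace $\ker H_{\Phi_-^*}$.

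Combining Steps 1 and 2 would show that both conditions in the \cite{GHR} characterization, and hence hyponormality of $T_\Phi$, are unchanged when $A_0,\ldots ,A_{N-m}$ are varied, which is the assertion of Theorem~\ref{C3S1TH2}.
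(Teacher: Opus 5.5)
Your Step 1 is correct, and it is essentially the paper's entire proof. The paper derives $K_0=\cdots=K_{N-m-1}=0$, $K_{N-m}=A_{-m}(A_N^*)^{-1}$, and the triangular recursion for $K_{N-m+1},\dots,K_{N-1}$, noting that only $A_{-1},\dots,A_{-m}$ and $A_{N-m+1},\dots,A_N$ enter. Your Carath\'eodory--Fej\'er formulation is slightly more complete, because it also records the converse. Any contractive $K$ with these first $N$ Taylor coefficients lies in $\mathcal{E}(\Phi)$, since the coefficients of $z^{-l}$ with $l>N$ vanish automatically. The paper never mentions normality of $\Phi$. It tacitly identifies hyponormality of $T_\Phi$ with $\mathcal{E}(\Phi)\neq\emptyset$. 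That is Cowen's criterion in the scalar setting of Curto--Lee, but it is not the matrix criterion of Gu--Hendricks--Rutherford, which also requires $\Phi^*\Phi=\Phi\Phi^*$ a.e.

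The genuine gap is Step 2, and no argument will close it: normality of $\Phi$ does depend on $A_0$, so the statement is false as written. Take $n=2$, $m=N=1$, $A_{-1}=\tfrac12 I_2$, $A_1=I_2$, so that $\Phi=\varphi I_2+A_0$ with $\varphi=\tfrac12\overline{z}+z$. Here $N-m=0$, so only $A_0$ is claimed to be irrelevant. For $A_0=0$ we have $T_\Phi=T_\varphi\oplus T_\varphi$ and $[T_\varphi^*,T_\varphi]=\tfrac34(I-T_zT_z^*)\ge 0$, so $T_\Phi$ is hyponormal. Now take $A_0=\begin{bmatrix}0&1\\0&0\end{bmatrix}$. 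Identify $H^2(\mathbb{T},\mathbb{C}^2)=H^2\otimes\mathbb{C}^2$, so that $T_\Phi=T_\varphi\otimes I_2+I_{H^2}\otimes A_0$, where the two summands and their adjoints commute. Then
\[
[T_\Phi^*,T_\Phi]=[T_\varphi^*,T_\varphi]\otimes I_2+I_{H^2}\otimes[A_0^*,A_0],\qquad [A_0^*,A_0]=\begin{bmatrix}-1&0\\0&1\end{bmatrix}.
\]
For $f=z\,e_1$ with $e_1=(1,0)^{T}$, this gives $\langle[T_\Phi^*,T_\Phi]f,f\rangle=-1$, so $T_\Phi$ is not hyponormal. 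Yet $K=\tfrac12 I_2\in\mathcal{E}(\Phi)$ for every $A_0$, exactly as your Step 1 predicts.

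Your Hankel fallback cannot help either. Here the Hankel terms do not see $A_0$ at all; the obstruction sits entirely in $T_{\Phi^*\Phi-\Phi\Phi^*}$, which is not finite rank.

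What your Step 1, and the paper's recursion, actually prove is that the condition $\mathcal{E}(\Phi)\neq\emptyset$ is independent of $A_0,\dots,A_{N-m}$. Combined with the Gu--Hendricks--Rutherford criterion, this gives the theorem only under the extra hypothesis that the symbols being compared are normal. That is the version you should state and prove.
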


\begin{proof}
	Since $T_\Phi$ is hyponormal, it follows from \cite[Corollary 5.2]{GHR} that $m \leq N$. \ Moreover, there exists a function $K \in H^{\infty} (\mathbb{T}, M_n)$ with $\|K\|_{\infty} \leq 1$ such that
	{\setlength{\belowdisplayskip}{0.1cm}
		\begin{eqnarray*}
			\Phi - K\Phi^{\ast} \in H^{\infty} (\mathbb{T}, M_n).
	\end{eqnarray*}}%
	Consequently, we obtain
	{\setlength{\belowdisplayskip}{0.35cm}
		\begin{eqnarray*}
			(K \Phi_{+}^{\ast} - \Phi_{-}^{\ast}) (e^{i\theta}) = K \sum\limits_{j=1}^{N} A_j^{\ast} e^{-ij\theta} I_n - \sum\limits_{j=1}^{m} A_{-j} e^{-ij\theta} I_n \in H^{\infty} (\mathbb{T}, M_n).
	\end{eqnarray*}}%
	Let $K (e^{i\theta}) = \sum_{j=0}^{\infty} K_j e^{ij\theta} I_n$.
	Then, we get
	{\setlength{\belowdisplayskip}{0.15cm}
		\begin{eqnarray*}
			K_0 = K_1 = \cdots = K_{N-m-1} = 0I_n \quad \text{and} \quad K_{N-m} = A_{-m}(A_N^{\ast})^{-1}.
	\end{eqnarray*}}%
	Moreover, for $n = N-m+1, N-m+2, \ldots, N-1$, we have
	{\setlength{\belowdisplayskip}{0.35cm}
		\begin{eqnarray*}
			K_n = \Bigg(A_{-N + n} - \sum\limits_{j=N-m}^{n-1} K_j A_{N-n+j}^{\ast}\Bigg){(A_N^{\ast})}^{-1}.
	\end{eqnarray*}}%
	To see this, observe that 
	{\setlength{\belowdisplayskip}{0.15cm}
		\begin{eqnarray*}
			K_j e^{ij\theta} I_n \cdot A_N^{\ast} e^{-iN \theta} I_n = K_j A_N^{\ast} e^{i(j-N)\theta} I_n.
	\end{eqnarray*}}%
	If $j <N-m$, then $K_j$ must be zero; otherwise, the term $K_j A_N^{\ast} e^{i(j-N)\theta} I_n$ will appear in the expansion of $(K \Phi_{+}^{\ast} - \Phi_{-}^{\ast}),$ which contradicts its analyticity. \ In the case where $j =N -m$, we observe that
	{\setlength{\belowdisplayskip}{0.15cm}
		\begin{eqnarray*}
			(K_{N-m} A_N^{\ast} - A_{-m}) e^{-im\theta} I_n = 0 \quad \text{if and only if}\quad  K_{N-m} = A_{-m}(A_N^{\ast})^{-1}.
	\end{eqnarray*}}%
	
	For $j > N-m,$ we obtain the recurrence relation
	{\setlength{\belowdisplayskip}{0.3cm}
		\begin{eqnarray*}
			K_j {A_N^{\ast}} + \sum\limits_{i=N-m}^{j-1} K_i A_{N-j+i}^{\ast}  - A_{-N + j} = 0.
	\end{eqnarray*}}%
	This relation is obtained by equating the corresponding Fourier coefficients in the expansion of $(K \Phi_{+}^{\ast} - \Phi_{-}^{\ast}).$ 
	
	Since $N-n+j > N-m$ for $n = N-m+1, N-m+2, \ldots, N-1$ and $N-m \leq j \leq n-1$, the recurrence is well-defined and the proof is complete.
\end{proof}


\begin{theorem}\label{C3S1TH3}
	Suppose that $\Phi = F^{\ast} + G = \sum_{j=-m}^{N}A_j e^{ij\theta} I_n \in L^{\infty} (\mathbb{T}, M_n),$ where $F$ and $G$ are analytic polynomials of degree $m$ and $N$, respectively and the leading coefficient $A_N$ is invertible. \ Now, define $\Psi = F^{\ast} + \mathbb{P} (e^{-ir\theta}I_n G)$, where $r\leq N-m$ and $\mathbb{P} (A)$ denotes the analytic part of the matrix-valued function $A \in L^{\infty}(\mathbb{T}, M_n)$. \ Then, $T_\Psi$ is hyponormal if and only if $T_\Phi$ is hyponormal. 
\end{theorem}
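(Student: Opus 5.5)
The plan is to reduce both hyponormality statements to the existence of a contractive analytic function $K$ with the same property. For that I will use the Gu--Hendricks--Rutherford characterization \cite{GHR}: $T_\Phi$ is hyponormal if and only if $\Phi$ is normal and $\mathcal{E}(\Phi)\neq\emptyset$, i.e.\ there is $K\in H^\infty(\mathbb{T},M_n)$ with $\|K\|_\infty\le 1$ and $\Phi-K\Phi^*\in H^\infty(\mathbb{T},M_n)$. Write $z=e^{i\theta}$ and $G=\sum_{j=0}^N A_jz^j$. Then
$$
\mathbb{P}(z^{-r}G)=\sum_{j=r}^N A_j z^{j-r}=z^{-r}(G-G_r),\qquad G_r:=\sum_{j=0}^{r-1}A_jz^j .
$$
So $\Psi_-=\Phi_-=F$ and $\Psi_+=z^{-r}(G-G_r)$. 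Since $\Phi-K\Phi^*=F^*+G-KF-KG^*$, the condition $\Phi-K\Phi^*\in H^\infty$ is equivalent to $F^*-KG^*\in H^\infty$. Likewise $\Psi-K'\Psi^*\in H^\infty$ is equivalent to $F^*-K'\Psi_+^*\in H^\infty$, where $\Psi_+^*=z^{r}(G^*-G_r^*)$. (I take $0\le r\le N-m$; recall $m\le N$ for hyponormal $T_\Phi$ by \cite[Corollary 5.2]{GHR}.)

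First suppose $T_\Phi$ is hyponormal, and take $K\in\mathcal{E}(\Phi)$. By the proof of Theorem \ref{C3S1TH2}, the Taylor coefficients satisfy $K_0=\cdots=K_{N-m-1}=0$, so $K=z^{N-m}K_1$ with $K_1\in H^\infty$. Since $r\le N-m$, the function $K':=z^{-r}K$ lies in $H^\infty$, and $\|K'\|_\infty=\|K\|_\infty\le1$. Then
$$
F^*-K'\Psi_+^*=F^*-K(G^*-G_r^*)=(F^*-KG^*)+KG_r^* .
$$
The first term is analytic by assumption. The second is $z^{N-m}K_1G_r^*$, and $G_r^*$ involves only the powers $z^{0},\dots,z^{-(r-1)}$ with $r-1<N-m$, so it is analytic too. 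Hence $K'\in\mathcal{E}(\Psi)$.

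Conversely, suppose $K'\in\mathcal{E}(\Psi)$ and put $K:=z^rK'$, which is analytic with $\|K\|_\infty\le1$. Then
$$
KG^*=z^rK'\bigl(z^{-r}\Psi_+^*+G_r^*\bigr)=K'\Psi_+^*+K'\,(z^rG_r^*).
$$
Here $z^rG_r^*=\sum_{j<r}A_j^*z^{r-j}$ is an analytic polynomial, so $F^*-KG^*$ differs from $F^*-K'\Psi_+^*$ by an $H^\infty$ function. Hence $K\in\mathcal{E}(\Phi)$. These two steps settle the $\mathcal{E}$-part of the criterion in both directions.

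The main obstacle is the normality requirement in the \cite{GHR} criterion: I must show that $\Phi$ is normal if and only if $\Psi$ is normal, and this is not automatic for matrix coefficients. My first attempt is to compare the Fourier coefficients of $\Phi^*\Phi-\Phi\Phi^*$ and $\Psi^*\Psi-\Psi\Psi^*$. I would use the relations forced by hyponormality, namely that $K$ vanishes to order $N-m$ with $K_{N-m}=A_{-m}(A_N^*)^{-1}$ and the recurrence from Theorem \ref{C3S1TH2}. The aim is to express the coanalytic coefficients $A_{-j}$ through $K$ and the top coefficients $A_{N-m+1},\dots,A_N$, which are common to $\Phi$ and the shifted $\Psi$. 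Failing that, I would work with the Hankel form $[T_\Phi^*,T_\Phi]=T_{\Phi^*\Phi-\Phi\Phi^*}+H_{\Phi^*}^*H_{\Phi^*}-H_\Phi^*H_\Phi$. There I would check that the Hankel parts for $\Phi$ and $\Psi$ are unitarily related via the shift by $z^r$, which does not change $H_{F}$, and argue normality from positivity.
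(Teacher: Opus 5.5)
Your treatment of the $\mathcal{E}$-condition is correct and is essentially the paper's argument. The paper first discards $A_0,\dots,A_{r-1}$ via Theorem \ref{C3S1TH2}, passing to $\Phi_0=F^*+\sum_{j\ge r}A_jz^j$, and then uses $K'=z^{-r}K$ and $K=z^{r}K'$. You absorb the term $G_r$ directly instead. Both rest on $K_0=\cdots=K_{N-m-1}=0$. What this actually proves is $\mathcal{E}(\Phi)\neq\emptyset\iff\mathcal{E}(\Psi)\neq\emptyset$ for $0\le r\le N-m$.

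The gap is the step you flag and leave open: normality of the symbol, which the Gu--Hendricks--Rutherford criterion requires in addition to $\mathcal{E}\neq\emptyset$. It cannot be closed, because ``$\Phi$ normal $\iff$ $\Psi$ normal'' is false. Normality of $\Phi$ involves the discarded coefficients $A_0,\dots,A_{r-1}$, e.g.\ through $A_0^*A_0-A_0A_0^*$ and $A_0^*A_N-A_NA_0^*$, and neither $K$ nor the top coefficients see these.

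Here is a counterexample. Take $J=\begin{bmatrix}0&1\\0&0\end{bmatrix}$, $F=\tfrac12 zI_2$ (so $m=1$), $G=J^*+(I_2+J)z^2$ (so $N=2$ and $A_2=I_2+J$ is invertible), and $r=1$.
\begin{itemize}
\item Since $F^*$ is scalar, $\Phi^*\Phi-\Phi\Phi^*=G^*G-GG^*$. This vanishes because $J$ commutes with $I_2+J$ and $JJ^*-J^*J=(I_2+J)(I_2+J^*)-(I_2+J^*)(I_2+J)$.
\item With $K=\tfrac12 z(I_2-J^*)$ we get $\Phi-K\Phi^*=J^*+(I_2+J)z^2-\tfrac12 z(I_2-J^*)J-\tfrac14 z^2(I_2-J^*)\in H^\infty$, and $\|K\|_\infty=\tfrac{1+\sqrt5}{4}<1$. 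Hence $T_\Phi$ is hyponormal.
\item But $\Psi=\tfrac12\bar zI_2+(I_2+J)z$ satisfies $\Psi^*\Psi-\Psi\Psi^*=J^*J-JJ^*=\mathrm{diag}(-1,1)$.
\item Test on $f=(z^k,0)^T$ with $k\ge1$. Both Hankel terms in $[T_\Psi^*,T_\Psi]=T_{\Psi^*\Psi-\Psi\Psi^*}+H_{\Psi^*}^*H_{\Psi^*}-H_\Psi^*H_\Psi$ vanish on $f$, so $\langle[T_\Psi^*,T_\Psi]f,f\rangle=-1$. Thus $T_\Psi$ is not hyponormal.
\end{itemize}

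So the statement as written is false for $n\ge2$. The paper's proof misses this because it passes from $\mathcal{E}(\Psi)\neq\emptyset$ (and, in the converse, from $\mathcal{E}(\Phi_0)\neq\emptyset$) straight to hyponormality. Theorem \ref{C3S1TH2}, on which it relies, has the same omission: in this example, deleting $A_0$ from $\Phi$ destroys normality.

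Your argument does establish a correct version: the equivalence holds whenever $\Phi$ and $\Psi$ are both normal, in particular in the scalar case $n=1$.
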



\begin{proof}
	Let $G_0 (e^{i\theta}) = \sum_{j=r}^{N}A_j e^{ij\theta}$ and define $\Phi_0 := F^{\ast} + G_0$. \ Consider the function $H (e^{i\theta}) = \mathbb{P} (e^{-ir\theta} I_n G (e^{i\theta})).$ Then, $H (e^{i\theta}) = e^{-ir\theta} I_n G_0 (e^{i\theta})$ and $\Psi = F^{\ast} + H$. \ By Theorem \ref{C3S1TH2}, $T_\Phi$ is hyponormal if and only if $T_{\Phi_0}$ is hyponormal. 
	
	Let us first assume that the operator $T_\Phi$ is hyponormal. \ Since $T_{\Phi_0}$ is also hyponormal, there exists $K \in H^{\infty} (\mathbb{T},M_n)$ such that 
	{\setlength{\belowdisplayskip}{0.15cm}
		\begin{eqnarray*}
			\Phi_0 - K\Phi_0^{\ast} \in H^{\infty}(\mathbb{T},M_n).
	\end{eqnarray*}}%
	Equivalently, this condition can be expressed as
	{\setlength{\belowdisplayskip}{0.15cm}
		\begin{eqnarray*}
			{\Phi_{0}}_{-}^{\ast} - K{\Phi_{0}}_{+}^{\ast} = F^{\ast} - K G_0^{\ast} \in H^{\infty}(\mathbb{T},M_n).
	\end{eqnarray*}}%
	Furthermore, by Theorem \ref{C3S1TH2}, the function $K$ admits a Fourier expansion of the form
	{\setlength{\belowdisplayskip}{0.25cm}
		\begin{eqnarray*}
			K (e^{i\theta}) = \sum_{j=N-m}^{\infty} C_j e^{ij\theta} I_n.
	\end{eqnarray*}}%
	Since $G^{\ast}_0 (e^{i\theta}) = e^{-ir\theta} I_n H^{\ast} (e^{i\theta})$, it follows that
	{\setlength{\belowdisplayskip}{0.15cm} 
		\begin{eqnarray*}
			F^{\ast} - K e^{-ir\theta} I_n H^{\ast} \in H^{\infty} (\mathbb{T}, M_n).
	\end{eqnarray*}}%
	Under the assumption $r\leq N-m$, we define $K^{\prime} := e^{-ir\theta} I_n  K.$ Then, $K^{\prime}$ belongs to $H^{\infty} (\mathbb{T}, M_n)$ and we see that
	{\setlength{\belowdisplayskip}{0.15cm} 
		\begin{eqnarray*}
			\|K^{\prime}\|_{\infty} = \|K e^{-ir\theta} I_n \|_{\infty} \leq \|K\|_{\infty} \leq 1.
	\end{eqnarray*}}%
	Hence,
	{\setlength{\belowdisplayskip}{0.15cm} 
		\begin{eqnarray*}
			F^{\ast} - K^{\prime} H^{\ast} = \Psi_{-}^{\ast} - K^{\prime} \Psi_{+}^{\ast} \in H^{\infty} (\mathbb{T}, M_n),
	\end{eqnarray*}}%
	which implies that 
	{\setlength{\belowdisplayskip}{0.15cm}
		\begin{eqnarray*}
			\Psi - K^{\prime} \Psi^{\ast} \in H^{\infty} (\mathbb{T}, M_n).
	\end{eqnarray*}}%
	Therefore, $T_\Psi$ is hyponormal.
	
	Conversely, assume that $T_\Psi$ is hyponormal. \ Then, there exists a matrix-valued function $K^{\prime} \in H^{\infty} (\mathbb{T}, M_n)$ with $\|K^{\prime}\|_{\infty} \leq 1$ such that 
	{\setlength{\belowdisplayskip}{0.1cm}
		\begin{eqnarray*}
			\Psi - K^{\prime} \Psi^{\ast} \in H^{\infty} (\mathbb{T}, M_n).
	\end{eqnarray*}}%
	Equivalently, since $\Psi_{-} = F$ and $\Psi_{+} = H$, it follows that
	{\setlength{\belowdisplayskip}{0.1cm}
		\begin{eqnarray}\label{C3EQ1}
			\Psi_{-}^{\ast} - K^{\prime} \Psi_{+}^{\ast} = F^{\ast} - K^{\prime} H^{\ast} \in H^{\infty} (\mathbb{T}, M_n).
	\end{eqnarray}}%
	Now, we define $K = K^{\prime} e^{ir\theta} I_n$. \ Clearly, $K \in H^{\infty} (\mathbb{T}, M_n).$ Moreover, $\|e^{ir\theta}\|_{\infty}\leq 1$ implies that $\|K\|_\infty \leq 1.$ From Equation \eqref{C3EQ1}, it follows that 
	{\setlength{\belowdisplayskip}{0.1cm} 
		\begin{eqnarray*} 
			F^{\ast} - K e^{-ir\theta} I_n H^{\ast} \in H^{\infty} (\mathbb{T}, M_n).
	\end{eqnarray*}}%
	Since $G^{\ast}_0 = e^{-ir\theta} I_n H^{\ast},$ we immediately obtain $F^{\ast} - K G_0^{\ast} \in H^{\infty} (\mathbb{T}, M_n).$ The representation $\Phi_0 = F^{\ast} + G_0$ implies that 
	${\Phi_{0}}_{-}^{\ast} - K{\Phi_{0}}_{+}^{\ast} \in H^{\infty} (\mathbb{T}, M_n).$ Equivalently,
		\begin{eqnarray*}
			\Phi_0 - K\Phi_0^{\ast} \in H^{\infty} (\mathbb{T}, M_n).
	\end{eqnarray*}
	Hence $T_{\Phi_0}$ is hyponormal and therefore, $T_\Phi$ is hyponormal.
\end{proof}

\section{Hyponormal and subnormal block Toeplitz operators with finite rank self-commutators}

In this section, we identify certain hyponormal block Toeplitz operators with finite rank self-commutator by analyzing the Cowen set of the symbol. \ T. Nakazi and K. Takahashi \cite{NT} proved that if the set $\mathcal{E}(\varphi)$ contains at least two elements for some $\varphi \in L^{\infty}(\mathbb{T})$, then $\varphi$ must be of bounded type. \ However, this result does not extend to the matrix-valued case, where $\Phi \in L^{\infty} (\mathbb{T}, M_n)$. \ For example, consider the matrix-valued function 
\begin{eqnarray*}
	\Phi (e^{i\theta}) :=
	\begin{bmatrix}
		(f + \overline{f}) (e^{i\theta}) & 0\\
		0 & e^{i\theta}
	\end{bmatrix} \in L^{\infty} (\mathbb{T}, M_2),
\end{eqnarray*}
where $\overline{f}$ is not of bounded type. \ Since 
\begin{eqnarray*}
	\Phi (e^{i\theta}) = 
	\begin{bmatrix}
		1 & 0\\
		0 & e^{i\theta}
	\end{bmatrix}
	\begin{bmatrix}
		(f + \overline{f}) (e^{i\theta}) & 0\\
		0 & e^{-i\theta}
	\end{bmatrix} + 
	\begin{bmatrix}
		0 & 0 \\
		0 & e^{i\theta} - 1
	\end{bmatrix}
\end{eqnarray*}
and 
\begin{eqnarray*}
	\Phi (e^{i\theta}) = 
	\begin{bmatrix}
		1 & 0\\
		0 & e^{i2\theta}
	\end{bmatrix}
	\begin{bmatrix}
		(f + \overline{f}) (e^{i\theta}) & 0\\
		0 & e^{-i\theta}
	\end{bmatrix},
\end{eqnarray*}
it follows that
\begin{eqnarray*}
	B (e^{i\theta}) = 
	\begin{bmatrix}
		1 & 0\\
		0 & e^{i\theta}
	\end{bmatrix} \quad \text{and} \quad 
	B_0 (e^{i\theta}) = 
	\begin{bmatrix}
		1 & 0\\
		0 & e^{i2\theta}
	\end{bmatrix}
\end{eqnarray*}
are elements of $\mathcal{E}(\Phi)$. \ Nevertheless, $\Phi$ is not of bounded type.

We now turn our attention to the following Conjecture \ref{C3S2CON1} formulated by R.E. Curto, I.S. Hwang and W.Y. Lee \cite{CHL1}, which plays an important role in the study of hyponormal block Toeplitz operators with finite rank self-commutator.

\begin{conjecture}\cite[Conjecture 6.1]{CHL1}\label{C3S2CON1}
	If $\Phi \in L^{\infty}(\mathbb{T}, M_n)$ is such that $T_\Phi$ is a hyponormal operator whose self-commutator $[T_\Phi^{\ast}, T_\Phi]$ is of finite rank, then there exists a finite Blaschke-Potapov product $B \in \mathcal{E}(\Phi)$ such that $\text{Rank }[T_\Phi^{\ast}, T_\Phi] = \text{deg }(\det B)$. 
\end{conjecture}


M. Abhinand, R.E. Curto, I.S. Hwang, W.Y. Lee and T. Prasad \cite{ACHLP} proved that if $\mathcal{E}(\Phi)$ contains a finite Blaschke-Potapov product $Q$, for a normal symbol $\Phi \in L^{\infty}(\mathbb{T}, M_n)$, then $[T_\Phi^{\ast}, T_\Phi]$ is a finite rank operator. \ At first, we check whether the conjecture is valid for normal block Toeplitz operator. \ Recall the following characterization of normal block Toeplitz operators by C. Gu, J. Hendricks and D. Rutherford \cite{GHR}:

\begin{lemma}\cite[Theorem 4.3]{GHR} \label{C3S2LE3} 
	Let $G= G_{+^{\prime}} + G_0 + G_{-}^{\ast} \in L^{\infty} (\mathbb{T}, M_n),$ where $G_0$ is a constant matrix and $G_{+^{\prime}} = G_{+} - G_0$. \ Suppose that $\det G_{+^{\prime}}$ is not identically zero. \ Then $T_G$ is normal if and only if the following two conditions are satisfied:
	\begin{enumerate}
		\item[(i)] $G^{\ast} G = G G^{\ast}$ almost everywhere on $\mathbb{T}$ and 
		
		\item[(ii)] there exists a constant unitary matrix $U \in M_n$ such that $G_{+^{\prime}} = G_{-} U$.
	\end{enumerate}
\end{lemma}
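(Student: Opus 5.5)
The plan is to work directly from the self-commutator identity recorded in Section 2,
$$
[T_G^{\ast}, T_G] = T_{G^{\ast}G - GG^{\ast}} + H_{G^{\ast}}^{\ast}H_{G^{\ast}} - H_{G}^{\ast}H_{G},
$$
together with two observations. First, a Hankel operator only sees the coanalytic part of its symbol, so $H_G = H_{G_-^{\ast}}$ and $H_{G^{\ast}} = H_{G_{+'}^{\ast}}$; the constant $G_0$ contributes nothing.

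\emph{Sufficiency.} Assume (i) and (ii). By (i) the Toeplitz term vanishes. By (ii) we have $G_{+'}^{\ast} = U^{\ast}G_-^{\ast}$. Applying $H_{\Psi\Phi} = T_{\widetilde\Psi}^{\ast}H_\Phi$ with the constant $\Psi = U^{\ast}$ gives
$$
H_{G_{+'}^{\ast}} = T_{U}^{\ast}H_{G_-^{\ast}} = U^{\ast}H_{G_-^{\ast}},
$$
since $\widetilde{U^{\ast}} = U$. Because $U$ is unitary, $H_{G^{\ast}}^{\ast}H_{G^{\ast}} = H_G^{\ast}H_G$, and hence $[T_G^{\ast},T_G]=0$.

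\emph{Necessity, condition (i).} Suppose $T_G$ is normal, so that
$$
T_{G^{\ast}G-GG^{\ast}} = H_G^{\ast}H_G - H_{G^{\ast}}^{\ast}H_{G^{\ast}}.
$$
Conjugate both sides by the shift $T_{zI_n}$ $k$ times. The left side is unchanged, since $T_z^{\ast}T_\Psi T_z = T_\Psi$. On the right, $H_\Phi T_z^k = H_{z^k\Phi}$, and $H_{z^k\Phi}\to 0$ strongly because the coanalytic part of $z^k\Phi$ is eventually pushed into $H^2$. Letting $k\to\infty$ yields $T_{G^{\ast}G-GG^{\ast}} = 0$, so $G^{\ast}G = GG^{\ast}$ a.e. This gives (i).

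\emph{Necessity, setup for (ii).} With (i) in hand, normality reduces to
$$
H_{G_-^{\ast}}^{\ast}H_{G_-^{\ast}} = H_{G_{+'}^{\ast}}^{\ast}H_{G_{+'}^{\ast}}.
$$
This equality lets us define an isometry $V\colon \overline{\mathrm{ran}}\,H_{G_-^{\ast}}\to\overline{\mathrm{ran}}\,H_{G_{+'}^{\ast}}$ by $V H_{G_-^{\ast}}f = H_{G_{+'}^{\ast}}f$. Using the Hankel intertwining $H_\Phi T_z = T_z^{\ast}H_\Phi$, $V$ commutes with the backward shift restricted to these $T_z^{\ast}$-invariant subspaces.

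\emph{Necessity, identifying $V$.} Here the hypothesis $\det G_{+'}\not\equiv 0$ enters. I would use it to show that $\overline{\mathrm{ran}}\,H_{G_{+'}^{\ast}}$ is large enough, namely that the constant vectors in $\mathbb{C}^n$ lie in it, or at least in a subspace on which $V$ is determined. The intended argument is to evaluate the ranges on constant vectors $x\in\mathbb C^n$: $H_{G_-^{\ast}}x$ is essentially $J_n$ applied to $G_-^{\ast}x$, and similarly for $G_{+'}$. Comparing the Gram matrices of the vectors $H(z^kx)$, which are built from shifted Fourier coefficients, would show that $V$ acts on these vectors by a single constant matrix $U^{\ast}$. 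In the form $\breve{G}_{+'}{}^{\ast}x \mapsto$ the corresponding coefficient vectors, this says $G_{+'}^{\ast} = U^{\ast}G_-^{\ast}$, i.e.\ $G_{+'} = G_-U$. Finally, unitarity of $U$ would follow from $V$ being isometric together with the nondegeneracy from $\det G_{+'}\not\equiv0$.

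\emph{Main obstacle.} The hardest step is passing from the operator equality of Gram operators to a \emph{constant} unitary intertwiner. Commuting with the backward shift alone would only give an analytic (multiplier-type) intertwiner, so an extra argument is needed. I would first try a commutant-lifting style argument: extend $V$ to an operator commuting with $T_z^{\ast}$ on all of $H^2(\mathbb T,\mathbb C^n)$, and then show it must also commute with $T_z$ on the relevant subspace. If that fails, the fallback is the direct coefficient comparison for the block Hankel matrices $\bigl((G_-)_{i+j+1}^{\ast}\bigr)$ and $\bigl((G_{+'})_{i+j+1}^{\ast}\bigr)$, using invertibility of $G_{+'}(z)$ for generic $z$.
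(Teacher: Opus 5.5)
The paper gives no proof of this lemma; it simply quotes it from Gu, Hendricks and Rutherford, so your proposal has to stand on its own. Your sufficiency argument is correct, and so is your proof of (i), which conjugates by $T_z^k$ and uses that $H_{z^k\Phi}\to 0$ strongly.

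\textbf{The gap is the necessity of (ii).} The paragraph ``identifying $V$'' lists possible strategies but contains no argument that $V$ is a \emph{constant} matrix. Its one concrete claim is false: constant vectors need not lie in $\overline{\operatorname{ran}}\,H_{G_{+'}^*}$, even when $\det G_{+'}\not\equiv 0$. Take $G=(\varphi+\overline{\varphi})I_n$ with $\varphi=b_\alpha-b_\alpha(0)$, where $b_\alpha(z)=(z-\alpha)/(1-\overline{\alpha}z)$ and $\alpha\neq 0$. Then $T_G$ is self-adjoint, yet $\operatorname{ran}H_{G_{+'}^*}=\{(1-\alpha z)^{-1}x : x\in\mathbb{C}^n\}$ contains no nonzero constant. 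Commutant lifting, as you note yourself, only gives an analytic intertwiner. Generic invertibility of $G_{+'}(z)$ is not what decides the matter either.

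\textbf{The missing idea is the defect identity} $I-T_zT_z^*=P_0$, the orthogonal projection of $H^2(\mathbb{T},\mathbb{C}^n)$ onto the constants. Combined with $H_\Phi T_z=T_z^*H_\Phi$, it gives
\[
H_\Phi^*H_\Phi-T_z^*H_\Phi^*H_\Phi T_z=H_\Phi^*P_0H_\Phi .
\]
Write $G_{+'}=\sum_{k\ge1}A_kz^k$ and $G_-=\sum_{k\ge1}B_kz^k$, and set $X_A:=P_0H_{G_{+'}^*}$ and $X_B:=P_0H_{G_-^*}$. These send $z^jx$ to $A_{j+1}^*x$ and $B_{j+1}^*x$ respectively. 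Your Gram identity $H_{G_{+'}^*}^*H_{G_{+'}^*}=H_{G_-^*}^*H_{G_-^*}$ then implies $X_A^*X_A=X_B^*X_B$.

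Both $X_A$ and $X_B$ map into $\mathbb{C}^n$. Hence $X_Bf\mapsto X_Af$ is a well-defined isometry between subspaces of $\mathbb{C}^n$ of equal dimension, and it extends to a unitary $W\in M_n$. It follows that $A_k^*=WB_k^*$ for every $k$, i.e.\ $G_{+'}=G_-W^*$, which is (ii) with $U=W^*$.

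In your own set-up the same fact reads
\[
\|P_0Vh\|^2=\|Vh\|^2-\|T_z^*Vh\|^2=\|h\|^2-\|T_z^*h\|^2=\|P_0h\|^2 .
\]
So $P_0h\mapsto P_0Vh$ is an isometry on $\mathbb{C}^n$, and since $VT_z^*=T_z^*V$, the operator $V$ acts on every Taylor coefficient by one constant unitary.

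Note that $\det G_{+'}\not\equiv 0$ is inessential here. It only guarantees that the vectors $A_k^*x$ span $\mathbb{C}^n$, so that $W$ needs no extension.
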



Next, we show that if $\mathcal{E}(\Phi)$ contains a constant unitary matrix $U$, then $T_\Phi$ is normal by modifying the methods and techniques in \cite{ACHLP}.

\begin{theorem} \label{C3S2TH4} 
	If $\Phi \in L^{\infty}(\mathbb{T}, M_n)$ and $\mathcal{E}(\Phi)$ contains a constant unitary matrix $U$, then the block Toeplitz operator $T_\Phi$ is normal.
\end{theorem}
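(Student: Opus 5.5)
The natural route is to use the identity recalled in Section 2,
$$
[T_\Phi^{\ast},T_\Phi]=T_{\Phi^{\ast}\Phi-\Phi\Phi^{\ast}}+H_{\Phi^{\ast}}^{\ast}H_{\Phi^{\ast}}-H_{\Phi}^{\ast}H_{\Phi},
$$
which follows from (\ref{basic}) for any $\Phi\in L^\infty(\mathbb{T},M_n)$. The plan is to show that the two Hankel terms cancel, and then to deal with the Toeplitz term. However, that last step cannot be done under the stated hypotheses, and I believe the statement as given is false.

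\textbf{Step 1: the Hankel terms cancel.} Since $U\in\mathcal{E}(\Phi)$, we can write $\Phi=U\Phi^{\ast}+G$ with $G\in H^\infty(\mathbb{T},M_n)$. Because $H_G=0$, we get $H_\Phi=H_{U\Phi^{\ast}}$. By the rule $H_{\Psi\Phi}=T_{\widetilde{\Psi}}^{\ast}H_\Phi$ with $\Psi=U$, we have $\widetilde U=U^{\ast}$, so $T_{\widetilde U}^{\ast}=T_U$. This operator is simply multiplication by the constant unitary $U$ on $H^2(\mathbb{T},\mathbb{C}^n)$, and it is a unitary operator. Hence
$$
H_\Phi=T_UH_{\Phi^{\ast}} \quad\text{and}\quad H_\Phi^{\ast}H_\Phi=H_{\Phi^{\ast}}^{\ast}T_U^{\ast}T_UH_{\Phi^{\ast}}=H_{\Phi^{\ast}}^{\ast}H_{\Phi^{\ast}}.
$$
Therefore $[T_\Phi^{\ast},T_\Phi]=T_{\Phi^{\ast}\Phi-\Phi\Phi^{\ast}}$. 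A Toeplitz operator is zero exactly when its symbol is zero, so $T_\Phi$ is normal if and only if $\Phi$ is normal a.e. on $\mathbb{T}$.

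\textbf{Step 2: the obstacle.} We would now need $\Phi^{\ast}\Phi=\Phi\Phi^{\ast}$ a.e. The relation $\Phi=U\Phi^{\ast}+G$ only ties the negative Fourier coefficients to the positive ones: $A_{-j}=UA_j^{\ast}$ for $j\ge1$. It says nothing about $A_0$, and nothing about pointwise normality.

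In fact the statement fails as written. Take $\Phi\equiv A$, a constant non-normal matrix such as $\left[\begin{smallmatrix}0&1\\0&0\end{smallmatrix}\right]$. Then $\Phi-U\Phi^{\ast}=A-UA^{\ast}$ is a constant, hence lies in $H^\infty(\mathbb{T},M_n)$, for every constant unitary $U$. So every constant unitary belongs to $\mathcal{E}(\Phi)$. Yet $T_A$ is multiplication by $A$, which is not normal.

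\textbf{What can actually be proved.} I would therefore establish the theorem under the additional hypothesis that $\Phi$ is normal, i.e.\ $\Phi^{\ast}\Phi=\Phi\Phi^{\ast}$ a.e. This is condition (i) of Lemma \ref{C3S2LE3}, and it is the setting of \cite{ACHLP}. Under that hypothesis, Step 1 immediately gives $[T_\Phi^{\ast},T_\Phi]=0$, so $T_\Phi$ is normal.

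For comparison with Lemma \ref{C3S2LE3}, condition (ii) there also follows from the coefficient relation. Taking adjoints in $A_{-j}=UA_j^{\ast}$ gives $\Phi_-=\Phi_{+'}U^{\ast}$, equivalently $\Phi_{+'}=\Phi_-U$. So under the normality assumption the lemma applies as well, whenever $\det\Phi_{+'}\not\equiv0$.

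The genuine gap is the missing normality of the symbol. Without it the conclusion cannot hold, so the correct version of the theorem must assume it.
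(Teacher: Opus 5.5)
Your proposal is correct. The gap you identified is exactly the gap in the paper's own proof. The paper's whole argument is the one-line claim $[T_\Phi^{\ast},T_\Phi]=H_{\Phi^{\ast}}^{\ast}(I-T_{\widetilde U}T_{\widetilde U^{\ast}})H_{\Phi^{\ast}}=0$. That is your Step~1 computation with the term $T_{\Phi^{\ast}\Phi-\Phi\Phi^{\ast}}$ silently dropped.

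This identity is the one from the normal-symbol setting of \cite{ACHLP}. It holds only when $\Phi^{\ast}\Phi=\Phi\Phi^{\ast}$ a.e., and the theorem as stated imposes no such hypothesis.

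Your counterexample is valid. Take $\Phi\equiv A=\left[\begin{smallmatrix}0&1\\0&0\end{smallmatrix}\right]$. Then $\Phi-K\Phi^{\ast}=A-KA^{\ast}$ is analytic for \emph{every} $K\in H^\infty(\mathbb{T},M_2)$. So $\mathcal{E}(\Phi)$ is the whole closed unit ball of $H^\infty(\mathbb{T},M_2)$, and in particular it contains every constant unitary. On the other hand, $[T_\Phi^{\ast},T_\Phi]$ is multiplication by $A^{\ast}A-AA^{\ast}=\mathrm{diag}(-1,1)$. Hence $T_\Phi$ is not even hyponormal.

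Your Step~1 in fact gives a sharp statement. Suppose a constant unitary lies in $\mathcal{E}(\Phi)$; then $[T_\Phi^{\ast},T_\Phi]=T_{\Phi^{\ast}\Phi-\Phi\Phi^{\ast}}$. A block Toeplitz operator with Hermitian symbol is positive exactly when its symbol is positive a.e. Also, $\Phi^{\ast}\Phi-\Phi\Phi^{\ast}$ has trace zero, so it is positive only if it vanishes. Therefore ``$T_\Phi$ normal'', ``$T_\Phi$ hyponormal'' and ``$\Phi$ normal a.e.'' are all equivalent under this hypothesis.

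So the correct theorem needs one extra assumption: either that $\Phi$ is normal, or that $T_\Phi$ is hyponormal. The latter forces pointwise normality of the symbol by \cite[Theorem 3.3]{GHR}, a fact the paper itself invokes in Theorem~\ref{thm38}. With that assumption, your proof and the paper's are the same computation.
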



\begin{proof}
	Suppose that $U \in \mathcal{E} (\Phi)$ is a constant unitary matrix, i.e., $U^{\ast} U = U U^{\ast} = I_n.$
	Then, it follows that
	{\setlength{\belowdisplayskip}{0.2cm}
		\begin{eqnarray*}
			[T_\Phi^{\ast}, T_\Phi] = H_{\Phi^{\ast}}^{\ast} (I - T_{\widetilde{U}} T_{\widetilde{U}^{\ast}}) H_{\Phi^{\ast}} = 0.
	\end{eqnarray*}}%
	This establishes the normality of $T_\Phi$ and thus completes the proof.
\end{proof}


The following theorem addresses the converse direction of Theorem \ref{C3S2TH4} by providing conditions under which the normality of the block Toeplitz operator $T_\Phi$ implies the existence of a constant unitary matrix in $\mathcal{E} (\Phi).$

\begin{theorem}\label{C3S2TH5}
	Let $\Phi= \Phi_{+^{\prime}} + \Phi_0 + \Phi_{-}^{\ast} \in L^{\infty}(\mathbb{T}, M_n)$ be such that $\det \Phi_{+^{\prime}}$ is not identically zero. \ If $T_\Phi$ is normal, then $\mathcal{E}(\Phi)$ contains a  constant unitary matrix $U \in M_n.$ 
\end{theorem}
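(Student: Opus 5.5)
The plan is to invoke Lemma \ref{C3S2LE3} (the Gu–Hendricks–Rutherford characterization) and turn condition (ii) directly into membership of a constant unitary in $\mathcal{E}(\Phi)$. Since $\det \Phi_{+^{\prime}}\not\equiv 0$ and $T_\Phi$ is normal, Lemma \ref{C3S2LE3} gives a constant unitary $U\in M_n$ with
\[
\Phi_{+^{\prime}} = \Phi_{-}U .
\]
The task is to find a constant unitary $V$ with $\|V\|_\infty\le 1$ and $\Phi - V\Phi^{\ast}\in H^\infty(\mathbb{T},M_n)$. The natural candidate is $V=U^{\ast}$ or $V=U$; the computation below decides which.

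Write $\Phi = \Phi_{+^{\prime}} + \Phi_0 + \Phi_-^{\ast}$, so $\Phi^{\ast} = \Phi_{+^{\prime}}^{\ast} + \Phi_0^{\ast} + \Phi_-$. For any constant $V$, the terms $\Phi_{+^{\prime}}$, $\Phi_0$, $V\Phi_0^{\ast}$ and $V\Phi_-$ are all analytic. Hence
\[
\Phi - V\Phi^{\ast}\in H^\infty \iff \Phi_-^{\ast} - V\Phi_{+^{\prime}}^{\ast}\in H^\infty .
\]
From (ii), $\Phi_{+^{\prime}}^{\ast} = U^{\ast}\Phi_-^{\ast}$, so
\[
\Phi_-^{\ast} - V\Phi_{+^{\prime}}^{\ast} = (I_n - VU^{\ast})\Phi_-^{\ast}.
\]
Choosing $V=U$ makes this identically zero. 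So $U\in\mathcal{E}(\Phi)$: it is analytic (constant), has $\|U\|_\infty=1$, and is unitary.

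The bookkeeping needs some care. The left side of the equivalence lies in $L^\infty$ because $\Phi\in L^\infty$ and $U$ is constant, so analyticity is all that must be checked. That is a Fourier-coefficient statement, and $\Phi_-^{\ast}$, $\Phi_{+^{\prime}}^{\ast}$ have only negative-index coefficients, so the split above is legitimate at the $L^2$ level. Also, in the lemma the roles of left versus right multiplication by $U$ (i.e.\ $\Phi_{+^{\prime}}=\Phi_-U$ rather than $U\Phi_-$) matter. If the convention produced $\Phi_{+^{\prime}}^{\ast}=\Phi_-^{\ast}U^{\ast}$ on the wrong side, I would instead take adjoints of the defining relation, or use the Hankel form $[T_\Phi^{\ast},T_\Phi]=H_{\Phi^{\ast}}^{\ast}H_{\Phi^{\ast}}-H_{\Phi}^{\ast}H_{\Phi}$. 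Note that $\Phi_-^{\ast}$ and $\Phi_{+^{\prime}}^{\ast}$ are matrices whose products must be ordered correctly.

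I expect the main obstacle to be exactly this noncommutativity: making sure the unitary from (ii) lands on the left of $\Phi^{\ast}$ as required by the definition of $\mathcal{E}(\Phi)$. With $\Phi_{+^{\prime}}=\Phi_-U$ as stated, it does, since $U\Phi_{+^{\prime}}^{\ast}=UU^{\ast}\Phi_-^{\ast}=\Phi_-^{\ast}$. So the identification is immediate, and condition (i) of the lemma is not needed.
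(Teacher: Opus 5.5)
Your proposal is correct and is essentially the paper's proof: both apply Lemma \ref{C3S2LE3} to obtain $\Phi_{+^{\prime}}=\Phi_-U$, then verify that $U\in\mathcal{E}(\Phi)$. The paper does this by computing $\Phi_-^{\ast}-U\Phi_+^{\ast}=-U\Phi_0^{\ast}$, a constant, while you split off the constant terms first and get $\Phi_-^{\ast}-U\Phi_{+^{\prime}}^{\ast}=0$. You are also right that condition (i) of the lemma is never needed.
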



\begin{proof}
	Recall that every normal operator is hyponormal. \ Therefore, the normality of $T_\Phi$ ensures that it is hyponormal. \ This, in turn, guarantees the existence of  a function $K \in H^{\infty}(\mathbb{T}, M_n)$ with $\|K\|_{\infty} \leq 1$ such that
	{\setlength{\belowdisplayskip}{0.1cm} 
		\begin{eqnarray*}
			\Phi - K\Phi^{\ast} \in H^{\infty}(\mathbb{T}, M_n).
	\end{eqnarray*}}%
	Equivalently, this condition holds if and only if 
	{\setlength{\belowdisplayskip}{0.1cm}
		\begin{eqnarray*}
			\Phi_{-}^{\ast} - K \Phi_{+}^{\ast} \in H^{\infty}(\mathbb{T}, M_n).
	\end{eqnarray*}}%
	By applying Lemma \ref{C3S2LE3}, we conclude that the normality of $T_\Phi$ implies two conditions: 
	{\setlength{\belowdisplayskip}{0.1cm}
		\begin{eqnarray*}
			\Phi^{\ast} \Phi = \Phi \Phi^{\ast} \quad \text{and} \quad \Phi_{+^{\prime}} = \Phi_{-} U
	\end{eqnarray*}}%
	for some constant unitary matrix $U \in M_n$. \ To verify that $U \in \mathcal{E} (\Phi)$, observe that
	{\setlength{\belowdisplayskip}{0.1cm}
		\begin{align*}
			\Phi_{-}^{\ast} - U \Phi_{+}^{\ast} & = \Phi_{-}^{\ast} - U ({\Phi_{+^{\prime}}}^{\ast} + \Phi_0^{\ast})\\
			& = \Phi_{-}^{\ast} - U (U^{\ast} \Phi_{-}^{\ast} + \Phi_0^{\ast})\\
			& = - U \Phi_0^{\ast},
	\end{align*}}%
	where the last equality holds because $U$ is unitary. \ Hence, $\Phi_{-}^{\ast} - U \Phi_{+}^{\ast}$ belongs to $H^{\infty}(\mathbb{T}, M_n)$, which completes the proof.
\end{proof}

The following example shows that $\mathcal{E}(\Phi)$ contains a constant unitary matrix even if $\det \Phi_{+} = 0.$

\begin{example}\label{C3S2EX6}
	Consider the function 
	\begin{eqnarray*}
		\Phi (e^{i\theta}) := 
		\begin{bmatrix}
			e^{i\theta} + e^{-i\theta} & e^{i\theta} + e^{-i\theta} \\
			e^{i\theta} + e^{-i\theta} & e^{i\theta} + e^{-i\theta}
		\end{bmatrix} \in L^{\infty} (\mathbb{T}, M_2).
	\end{eqnarray*}
	The associated block Toeplitz operator is given by 
	\begin{eqnarray*}
		T_\Phi = 
		\begin{bmatrix}
			T_{e^{i\theta} + e^{-i\theta}} & T_{e^{i\theta} + e^{-i\theta}}\\
			T_{e^{i\theta} + e^{-i\theta}} & T_{e^{i\theta} + e^{-i\theta}}
		\end{bmatrix}.
	\end{eqnarray*}
	Note that $T_\Phi$ is self-adjoint and hence normal. \ Observe that the symbol $\Phi$ satisfies the relation $\Phi = I_2 \Phi^{\ast},$ where $I_2$ is the $2\times 2$ identity matrix. \ Consequently, the constant unitary matrix $U = I_2$ belongs to $\mathcal{E} (\Phi).$ However, a straightforward computation shows that
	\begin{eqnarray*}
		\Phi_{+^{\prime}} = 
		\begin{bmatrix}
			e^{i\theta} & e^{i\theta}\\
			e^{i\theta} & e^{i\theta}
		\end{bmatrix} \quad \text{and hence } \det \Phi_{+^{\prime}} \equiv 0.
	\end{eqnarray*}
\end{example}

Now, we give a partial answer to Conjecture \ref{C3S2CON1}.

\begin{theorem}\label{C3S2TH7}
	Let $\Phi \in L^{\infty}(\mathbb{T}, M_n)$ be such that each row of $\Phi^{\ast}$ contains at least one scalar-valued function that is not of bounded type. \ If $T_\Phi$ is hyponormal and has a self-commutator of finite rank, then the set $\mathcal{E}(\Phi)$ contains a finite Blaschke-Potapov product. 
\end{theorem}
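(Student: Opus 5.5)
The plan is to take any $K\in\mathcal{E}(\Phi)$, which exists because $T_\Phi$ is hyponormal. Using the row hypothesis on $\Phi^\ast$, I will show that this $K$ is forced to be a finite Blaschke--Potapov product.

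\textbf{Step 1 (factor the self-commutator).} Fix $K\in\mathcal{E}(\Phi)$, so $\|K\|_\infty\le 1$ and $\Phi-K\Phi^\ast\in H^\infty(\mathbb{T},M_n)$. Since Hankel operators only see the co-analytic part of the symbol, $H_\Phi=H_{K\Phi^\ast}=T_{\widetilde K}^\ast H_{\Phi^\ast}$ by \eqref{basic}. As in the proof of Theorem \ref{C3S2TH4}, this gives
$$
[T_\Phi^\ast,T_\Phi]=H_{\Phi^\ast}^\ast H_{\Phi^\ast}-H_\Phi^\ast H_\Phi=H_{\Phi^\ast}^\ast\,(I-T_{\widetilde K}T_{\widetilde K}^\ast)\,H_{\Phi^\ast}.
$$
Write $D:=I-T_{\widetilde K}T_{\widetilde K}^\ast$. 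Then $D\ge 0$ because $\|T_{\widetilde K}\|\le\|K\|_\infty\le1$.

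\textbf{Step 2 (use the non-bounded-type hypothesis).} I want $H_{\Phi^\ast}$ to have dense range, i.e. $\ker H_{\Phi^\ast}^\ast=\ker H_{\widetilde{\Phi^\ast}}=\{0\}$. Suppose $f=(f_1,\dots,f_n)^t$ lies in the kernel. Then $\ker H_{\widetilde{\Phi^\ast}}$ is $T_z$-invariant and nonzero, so by the Beurling--Lax theorem it has the form $\Theta H^2(\mathbb{T},\mathbb{C}^m)$ with $\Theta$ inner. The condition says each row of the symbol times $f$ is co-analytic-free. Working entrywise, I would show that a nonzero kernel forces every entry in some row of $\Phi^\ast$ to be of bounded type. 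This uses the scalar fact that $\ker H_\varphi\neq\{0\}$ iff $\varphi$ is of bounded type, together with the fact that the row structure is preserved by the flip $\,\breve{}\,$ and conjugation. That contradicts the hypothesis, so $\overline{\operatorname{Ran}H_{\Phi^\ast}}=H^2(\mathbb{T},\mathbb{C}^n)$. This bookkeeping (matching "rows of $\Phi^\ast$" with the correct Hankel operator, and passing from a vector in the kernel to bounded type of a whole row) is the step I expect to be the main obstacle. I would first try it by choosing an inner-outer factorization of a nonzero kernel element and cancelling coordinate-wise.

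\textbf{Step 3 (finite rank of $D$).} We have $[T_\Phi^\ast,T_\Phi]=(D^{1/2}H_{\Phi^\ast})^\ast(D^{1/2}H_{\Phi^\ast})$. Hence $D^{1/2}H_{\Phi^\ast}$ has finite rank. Because $H_{\Phi^\ast}$ has dense range, $\operatorname{Ran}D^{1/2}$ lies in the closure of a finite-dimensional space. Therefore $D$ has finite rank, and in fact $\operatorname{rank}D=\operatorname{rank}[T_\Phi^\ast,T_\Phi]$.

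\textbf{Step 4 (identify $K$).} Since $D$ has finite rank, $T_{\widetilde K}^\ast=T_{\widetilde K^\ast}$ is isometric on the finite-codimensional subspace $\mathcal{M}:=\ker D$. For $f\in\mathcal{M}$ we have
$$
\|f\|=\|P_n(\widetilde K^\ast f)\|\le\|\widetilde K^\ast f\|\le\|f\|,
$$
so $\widetilde K^\ast f\in H^2(\mathbb{T},\mathbb{C}^n)$ and $|\widetilde K^\ast f|=|f|$ a.e. Because $\mathcal{M}$ has finite codimension, it contains $z^NH^2(\mathbb{T},\mathbb{C}^n)$ for large $N$, hence functions spanning $\mathbb{C}^n$ pointwise a.e. It follows that $\widetilde K(e^{i\theta})^\ast$ is an isometry a.e., so $\widetilde K$ is a square inner function.

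For inner $\widetilde K$ one has $T_{\widetilde K}^\ast T_{\widetilde K}=I$, so $T_{\widetilde K}T_{\widetilde K}^\ast$ is the projection onto $\widetilde K H^2(\mathbb{T},\mathbb{C}^n)$. Thus $D$ is the projection onto $H^2(\mathbb{T},\mathbb{C}^n)\ominus\widetilde K H^2(\mathbb{T},\mathbb{C}^n)$, and finite rank of $D$ makes this model space finite dimensional. By the standard Potapov factorization, an inner matrix function with finite-dimensional model space is a finite Blaschke--Potapov product. Hence $\widetilde K$ is one. Finally, $K(z)=\widetilde K(\bar z)^\ast$ evaluated on the circle is again a finite Blaschke--Potapov product: each factor $b_m R_m+(I-R_m)$ transforms into a factor of the same form with $\alpha_m$ replaced by $\overline{\alpha_m}$. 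So $K\in\mathcal{E}(\Phi)$ is the desired finite Blaschke--Potapov product, with $\deg\det K=\operatorname{rank}[T_\Phi^\ast,T_\Phi]$.
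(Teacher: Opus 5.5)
Your route is the paper's own. Steps~1 and~3 reproduce the identity $\operatorname{Rank}[T_\Phi^{\ast},T_\Phi]=\operatorname{Rank}(I-T_{\widetilde K}T_{\widetilde K}^{\ast})$, which the paper takes from \cite[Lemma 4.1]{ACHLP}. Step~4 fills in the paper's remark that finite rank of $I-T_{\widetilde K}T_{\widetilde K}^{\ast}$ makes $\widetilde K$ a finite Blaschke--Potapov product. Step~2 is the paper's one-line claim that the row hypothesis gives $H_{\Phi^{\ast}}$ dense range.

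The gap is Step~2. It is not just unproved but false, so neither your coordinate-wise sketch nor the paper's assertion can be completed. Take $f\in H^\infty(\mathbb{T})$ with $\overline f$ not of bounded type, and let $\psi:=f+\overline f$. This $\psi$ is real-valued and not of bounded type, since otherwise $\overline f=\psi-f$ would be. Set
\[
\Phi:=\begin{bmatrix}\psi&\psi\\ \psi&\psi\end{bmatrix},\qquad K_0:=\frac12\begin{bmatrix}1&1\\ 1&1\end{bmatrix}.
\]
Then $\Phi^{\ast}=\Phi$, every entry of $\Phi^{\ast}$ fails to be of bounded type, and $T_\Phi$ is self-adjoint, so it is hyponormal with $[T_\Phi^{\ast},T_\Phi]=0$. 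But $\Phi^{\ast}g=\psi\,(g_1+g_2)\,(1,1)^t$, so $\operatorname{Ran}H_{\Phi^{\ast}}\subseteq\{(h,h)^t: h\in H^2(\mathbb{T})\}$ is not dense. Equivalently, $(1,-1)^t\in\ker H_{\breve\Phi}=\ker H_{\Phi^{\ast}}^{\ast}$.

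The example also breaks your overall plan. Since $K_0\Phi^{\ast}=\Phi$, we have $K_0\in\mathcal{E}(\Phi)$. Yet $K_0$ is not inner, and $I-T_{\widetilde K_0}T_{\widetilde K_0}^{\ast}$ is multiplication by the constant rank-one projection $I_2-K_0$, which has infinite rank on $H^2(\mathbb{T},\mathbb{C}^2)$. So an arbitrary $K\in\mathcal{E}(\Phi)$ is \emph{not} forced to be a finite Blaschke--Potapov product. The conclusion holds here only because a different element, $I_2$, happens to lie in $\mathcal{E}(\Phi)$.

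The underlying reason is that $f\in\ker H_{\breve\Phi}$ means $\sum_j\breve\phi_{ij}f_j\in H^2$ for every $i$. The row hypothesis only rules out kernel vectors supported on one coordinate, $f=he_j$, since those would force row $j$ of $\Phi^{\ast}$ to be of bounded type. A general kernel vector can exploit cancellation between columns, as above. In fact the example is just $V\operatorname{diag}(2\psi,0)V^{\ast}$ for a constant unitary $V$. The row hypothesis is not invariant under such conjugations, whereas hyponormality, the rank of the self-commutator, and the existence of a finite Blaschke--Potapov product in $\mathcal{E}(\Phi)$ all are.

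As written, then, the argument proves the statement only under the stronger hypothesis that $H_{\Phi^{\ast}}$ has dense range, i.e.\ $\ker H_{\breve\Phi}=\{0\}$. Under that hypothesis your Steps~1, 3 and~4 do give a complete proof. Without it one must \emph{choose} a good $K$ from a possibly large set $\mathcal{E}(\Phi)$. That choice is the genuinely hard interpolation part of Conjecture~\ref{C3S2CON1}, and neither argument supplies an idea for it.

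Two smaller repairs:
\begin{itemize}
\item In Step~1 you need $\Phi^{\ast}\Phi=\Phi\Phi^{\ast}$ a.e.\ to discard the term $T_{\Phi^{\ast}\Phi-\Phi\Phi^{\ast}}$; this does follow from hyponormality, but you should say so.
\item In Step~4, a finite-codimensional subspace need not contain $z^NH^2(\mathbb{T},\mathbb{C}^n)$; think of $bH^2$ with $b$ a Blaschke factor vanishing at some $\alpha\neq0$. Instead, note that $\ker D$ meets each $H^2(\mathbb{T})e_j$ nontrivially. This gives $h_je_j\in\ker D$ with $h_j\neq0$ a.e., hence $|\widetilde K^{\ast}e_j|=1$ a.e. Then $\operatorname{tr}(I_n-\widetilde K\widetilde K^{\ast})=0$, and since $I_n-\widetilde K\widetilde K^{\ast}\ge 0$, it follows that $\widetilde K\widetilde K^{\ast}=I_n$.
\end{itemize}
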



\begin{proof}
Assume that each row of $\Phi^{\ast}$ contains at least one scalar-valued function that is not of bounded type. \ Under this condition, it follows that the Hankel operator  $H_{\Phi^{\ast}}$ has dense range. \ Consequently, the adjoint operator $H_{\Phi^{\ast}}^{\ast}$ is injective. \ Therefore by \cite[Lemma 4.1]{ACHLP}, we obtain the identity
\begin{eqnarray*}
\operatorname{Rank }[T_\Phi^{\ast}, T_\Phi] = \operatorname{Rank } (I - T_{\widetilde{K}} T_{\widetilde{K}^{\ast}} ),
\end{eqnarray*}
for some $K \in \mathcal{E} (\Phi).$ Since the commutator  $[T_\Phi^{\ast}, T_\Phi]$ is assumed to be of finite rank, it follows that the operator $I - T_{\widetilde{K}} T_{\widetilde{K}^{\ast}}$ is also of finite rank. \ This implies that $\widetilde{K}$ must be a finite Blaschke-Potapov product and hence so is $K$. \ This completes the proof.
\end{proof}

The following example shows that $\mathcal{E}(\Phi)$ contains a finite Blaschke-Potapov product even if $\Phi^{\ast}$ has a row that doesn't contain a function which is not of bounded type.

\begin{example} \label{C3S2EX8} 
	Consider the matrix-valued function
	\begin{eqnarray*}
		\Phi (e^{i\theta}) := 
		\begin{bmatrix}
			(f+\overline{f}) (e^{i\theta}) & 0\\
			0 & e^{i\theta}
		\end{bmatrix} \in L^{\infty} (\mathbb{T}, M_2),
	\end{eqnarray*}
	where $f \in H^2$. \ In this case, the block Toeplitz operator $T_\Phi$ is hyponormal and the self-commutator is given by
	\begin{eqnarray*}
		[T_\Phi^{\ast}, T_\Phi] = 
		\begin{bmatrix}
			0 & 0\\
			0 & [T_{e^{i\theta}}^{\ast}, T_{e^{i\theta}}]
		\end{bmatrix}.
	\end{eqnarray*}
	Since $[T_{e^{i\theta}}^{\ast}, T_{e^{i\theta}}]$ has rank one, it follows that the self-commutator $[T_\Phi^{\ast}, T_\Phi]$ is a finite rank operator . \ However, note that the second row of $\Phi$ does not contain a scalar-valued function that fails to be of bounded type. \ Despite this, the matrix-valued function
	\begin{eqnarray*}
		B (e^{i\theta}) = 
		\begin{bmatrix}
			1 & 0\\
			0 & e^{i\theta}
		\end{bmatrix} \in \mathcal{E}(\Phi).
	\end{eqnarray*}
\end{example}

\begin{theorem} \label{thm38}
Let $\Phi \in H^{\infty}(\mathbb{T}, M_n)$ be such that $\Phi^{\ast}$ is of bounded type and $T_\Phi$ is hyponormal. \ Then $[T_\Phi^{\ast}, T_\Phi]$ is a finite rank operator if and only if there exists a finite Blaschke–Potapov product in $\mathcal{E}(\widetilde{\Phi})$. 
\end{theorem}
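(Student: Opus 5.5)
The plan is to exploit the fact that $\Phi$ is analytic: the self-commutator then reduces to a single Hankel term, and both conditions in the theorem become statements about $\ker H_{\Phi^*}$. Since $\Phi\in H^\infty(\mathbb{T},M_n)$, we have $H_\Phi=0$, so by the identities \eqref{basic},
$$
[T_\Phi^*,T_\Phi]=T_{\Phi^*\Phi}-T_{\Phi\Phi^*}+H_{\Phi^*}^*H_{\Phi^*}-H_\Phi^*H_\Phi .
$$
Because $\Phi$ is analytic, $T_{\Phi^*}T_\Phi=T_{\Phi^*\Phi}$, so this simplifies to
$$
[T_\Phi^*,T_\Phi]=H_{\Phi^*}^*H_{\Phi^*}.
$$
Hence $\operatorname{Rank}[T_\Phi^*,T_\Phi]=\operatorname{Rank}H_{\Phi^*}$. (Hyponormality is in fact automatic here.) So the task is to show that $H_{\Phi^*}$ has finite rank if and only if $\mathcal{E}(\widetilde\Phi)$ contains a finite Blaschke--Potapov product.

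Next I unwind the condition $K\in\mathcal{E}(\widetilde\Phi)$. For $\Phi$ analytic, $\breve\Phi$ is coanalytic, so $\widetilde\Phi=\breve\Phi^*\in H^\infty(\mathbb{T},M_n)$. Thus $\widetilde\Phi-K\widetilde\Phi^*\in H^\infty$ is equivalent to $K\breve\Phi\in H^\infty$. I will use the elementary fact that $M\in H^\infty$ if and only if $\widetilde M\in H^\infty$, together with
$$
\widetilde{K\breve\Phi}=(\breve K\Phi)^*=\Phi^*\widetilde K .
$$
With these, $K\in\mathcal{E}(\widetilde\Phi)$ if and only if $\|K\|_\infty\le 1$ and $\Phi^*\widetilde K\in H^\infty$. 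The latter says exactly that $\widetilde K H^2(\mathbb{T},\mathbb{C}^n)\subseteq\ker H_{\Phi^*}$. I will also check that $K\mapsto\widetilde K$ maps finite Blaschke--Potapov products to finite Blaschke--Potapov products: each factor $b_\alpha R+(I-R)$ goes to $b_{\bar\alpha}R+(I-R)$, and the order of the factors reverses. The norm is preserved.

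For ($\Leftarrow$), suppose $B\in\mathcal{E}(\widetilde\Phi)$ is a finite Blaschke--Potapov product, and put $\Theta=\widetilde B$. Then $\Theta H^2\subseteq\ker H_{\Phi^*}$, so $\operatorname{Rank}H_{\Phi^*}\le\dim\big(H^2\ominus\Theta H^2\big)=\deg\det\Theta<\infty$.

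For ($\Rightarrow$), I use that $\ker H_{\Phi^*}$ is $T_z$-invariant. By the Beurling--Lax--Halmos theorem it equals $\Theta H^2$ for some inner $\Theta$. Since $\Phi^*$ is of bounded type, the kernel contains $\theta H^2(\mathbb{T},\mathbb{C}^n)$ for a scalar inner $\theta$, which forces $\Theta$ to be square inner. If $H_{\Phi^*}$ has finite rank, then $H^2\ominus\Theta H^2$ is finite dimensional, and a square inner function with finite-dimensional model space is a finite Blaschke--Potapov product (Potapov factorization). Setting $K=\widetilde\Theta$ gives a finite Blaschke--Potapov product with $\|K\|_\infty=1$ and $\Phi^*\widetilde K=\Phi^*\Theta\in H^\infty$, so $K\in\mathcal{E}(\widetilde\Phi)$.

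I expect the main obstacle to be the flip bookkeeping, namely verifying $\widetilde{K\breve\Phi}=\Phi^*\widetilde K$ and that $\widetilde{(\cdot)}$ preserves analyticity and the finite Blaschke--Potapov form. A second delicate point is the square-inner and finite-codimension step, which is where the bounded-type hypothesis on $\Phi^*$ is used.
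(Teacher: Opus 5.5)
\paragraph{Gap in the reduction step.} Your overall route is the same as the paper's. You reduce to $H_{\Phi^*}^*H_{\Phi^*}$, write $\ker H_{\Phi^*}=\Theta H^2(\mathbb{T},\mathbb{C}^n)$ with $\Theta$ square inner using bounded type, use that $\dim\mathcal{H}(\Theta)<\infty$ exactly when $\Theta$ is a finite Blaschke--Potapov product, and transfer via $\widetilde{(\cdot)}$. Your flip bookkeeping is correct.

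The problem is the first step, on which both directions rest: it is justified incorrectly. Your own display, with $H_\Phi=0$, gives
\[
[T_\Phi^*,T_\Phi]=T_{\Phi^*\Phi-\Phi\Phi^*}+H_{\Phi^*}^*H_{\Phi^*}.
\]
Analyticity does not remove the Toeplitz term, because for matrix symbols $\Phi^*\Phi\neq\Phi\Phi^*$ in general. Your parenthetical claim that hyponormality is automatic is false.

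Take the constant symbol $\Phi=\begin{bmatrix}0&1\\0&0\end{bmatrix}$. Then $H_{\Phi^*}=0$, but
\[
[T_\Phi^*,T_\Phi]=\begin{bmatrix}-I&0\\0&I\end{bmatrix},
\]
which is neither positive nor of finite rank. The same example shows the hyponormality hypothesis is indispensable for ($\Leftarrow$). Here $\Phi^*$ is of bounded type and $I_2\in\mathcal{E}(\widetilde\Phi)$ (as is $zI_2$), yet the self-commutator has infinite rank. Your argument never uses hyponormality, so it would ``prove'' the opposite.

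\paragraph{The missing idea and a repair.} What you need is that hyponormality of $T_\Phi$ forces $\Phi^*\Phi=\Phi\Phi^*$ a.e.; the paper cites \cite[Theorem 3.3]{GHR} for this. You can also see it directly:
\begin{itemize}
\item Since $z^k$ is scalar, $T_{z^k}^*T_\Psi T_{z^k}=T_\Psi$ for every $\Psi\in L^\infty(\mathbb{T},M_n)$.
\item Also $H_{\Phi^*}T_{z^k}=H_{z^k\Phi^*}\to 0$ strongly as $k\to\infty$.
\item Compressing $[T_\Phi^*,T_\Phi]\ge 0$ by $T_{z^k}$ and letting $k\to\infty$ therefore gives $T_{\Phi^*\Phi-\Phi\Phi^*}\ge 0$.
\item Hence $\Phi^*\Phi-\Phi\Phi^*\ge 0$ a.e. It has trace zero, so it vanishes.
\end{itemize}
With this inserted, the hyponormality hypothesis is actually used, $[T_\Phi^*,T_\Phi]=H_{\Phi^*}^*H_{\Phi^*}$ holds, and the rest of your proof goes through as written.
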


\begin{proof}
Recall that 
$$[T_\Phi^{\ast}, T_\Phi] = H_{\Phi^{\ast}}^{\ast} H_{\Phi^{\ast}} - H_\Phi^{\ast} H_\Phi + T_{[\Phi^{\ast}, \Phi]}.$$
Since $\Phi$ is analytic, it follows that $H_\Phi \equiv 0$ and hence, $H_\Phi^{\ast} H_\Phi \equiv 0.$ Hyponormality of $T_\Phi$ implies that $\Phi (e^{i\theta})$ is normal almost everywhere \cite[Theorem 3.3]{GHR}. \ That is, $T_{[\Phi^{\ast},\Phi]} \equiv 0.$ Therefore, we obtain $$[T_\Phi^{\ast}, T_\Phi] = H_{\Phi^{\ast}}^{\ast} H_{\Phi^{\ast}}.$$

At first, assume that $[T_\Phi^{\ast}, T_\Phi]$ is a finite rank operator. \ Because $\Phi^{\ast}$ is of bounded type, it follows that
$$\ker [T_\Phi^{\ast}, T_\Phi] = \ker H_{\Phi^{\ast}}^{\ast} H_{\Phi^{\ast}} = \ker H_{\Phi^{\ast}} = \Theta H^2(\mathbb{T}, \mathbb{C}^n),$$
for some inner function $\Theta \in H^{\infty} (\mathbb{T}, M_n)$ \cite[Theorem 2.2]{GHR}. \ Consequently,
$$0 = H_{\Phi^{\ast}} \Theta H^2(\mathbb{T}, \mathbb{C}^n) = H_{\Phi^{\ast}} T_\Theta H^2(\mathbb{T}, \mathbb{C}^n) = H_{\Phi^{\ast} \Theta} H^2(\mathbb{T}, \mathbb{C}^n),$$
which implies that $\Phi^{\ast} \Theta \in H^{\infty} (\mathbb{T}, M_n).$ Since $\widetilde{\Phi}$ and $\widetilde{\Theta} \widetilde{\Phi^{\ast}}$  are analytic, it follows that $\widetilde{\Phi} - \widetilde{\Theta} \widetilde{\Phi^{\ast}} \in H^{\infty} (\mathbb{T}, M_n).$ Since $\ker [T_\Phi^{\ast}, T_\Phi] = \Theta H^2(\mathbb{T}, \mathbb{C}^n)$ and $\operatorname{Ran} [T_\Phi^{\ast}, T_\Phi]$ is finite dimensional, we obtain
$$\mathcal{H} (\Theta) = \overline{\operatorname{Ran} [T_\Phi^{\ast}, T_\Phi]} = \operatorname{Ran} [T_\Phi^{\ast}, T_\Phi] < \infty.$$
By \cite[Lemma 2.4]{CHL}, this forces $\Theta$ to be a finite Blaschke–Potapov product, and hence so is $\widetilde{\Theta}$. 

Conversely, assume that there exists a finite Blaschke–Potapov product $\widetilde{\Theta} \in \mathcal{E} (\widetilde{\Phi}).$ That is $\widetilde{\Phi} - \widetilde{\Theta} \widetilde{\Phi^{\ast}} \in H^{\infty} (\mathbb{T}, M_n).$ Since $\Phi$ is analytic, $\widetilde{\Phi}$ is also analytic and so is $\widetilde{\Theta} \widetilde{\Phi^{\ast}}.$ This implies that $\Phi^{\ast} \Theta \in H^{\infty} (\mathbb{T}, M_n).$ Therefore, we observe that
$$0 = H_{\Phi^{\ast} \Theta} H^2(\mathbb{T}, \mathbb{C}^n) = H_{\Phi^{\ast}} T_\Theta H^2(\mathbb{T}, \mathbb{C}^n)= H_{\Phi^{\ast}} \Theta H^2(\mathbb{T}, \mathbb{C}^n).$$
Consequently, $\Theta H^2(\mathbb{T}, \mathbb{C}^n) \subseteq \ker H_{\Phi^{\ast}} =  \ker H_{\Phi^{\ast}}^{\ast} H_{\Phi^{\ast}} = \ker [T_\Phi^{\ast}, T_\Phi].$ This implies that $(\ker [T_\Phi^{\ast}, T_\Phi])^{\perp} = \overline{\operatorname{Ran} [T_\Phi^{\ast}, T_\Phi]} \subseteq \mathcal{H} (\Theta).$ Since $\widetilde{\Theta}$ is a finite Blaschke-Potapov product, it follows that $\Theta$ is also a finite Blaschke-Potapov product. \ Consequently, $\dim \mathcal{H}(\Theta) < \infty$ \cite[Lemma 2.4]{CHL}. \ That is, 
$\dim \operatorname{Ran} [T_\Phi^{\ast}, T_\Phi] \leq \dim \overline{\operatorname{Ran} [T_\Phi^{\ast}, T_\Phi]} \leq \dim \mathcal{H}(\Theta) < \infty.$
\end{proof}

\begin{remark}
A careful analysis of the second part of the Proof of Theorem \ref{thm38} shows that if $\mathcal{E}(\widetilde{\bar{\Phi}})$ contains a finite Blaschke-Potapov product , for a symbol $\Phi \in H^\infty(\mathbb{T},M_n)$, then $[T_\Phi^*,T_\Phi)]$ is a finite rank operator.
\end{remark}

 
\section{Open Problems}
In the context of Toeplitz operators, the hyponormal Toeplitz operators with finite rank self-commutator were characterized by T. Nakazi and K. Takahashi \cite{NT}. \ Building upon this direction of research, R.E. Curto, I.S. Hwang and W.Y. Lee \cite{CHL1} examined the analogous question in the framework of block Toeplitz operators and proposed Conjecture \ref{C3S2CON1}. \ Theorems \ref{C3S2TH4}, \ref{C3S2TH5}, \ref{C3S2TH7} and \cite[Lemma 4.3]{ACHLP} provide partial progress towards solving this conjecture.
 
Despite these recent developments, a complete solution of Conjecture \ref{C3S2CON1} remains open. \ Thus, it is appropriate to re-frame the conjecture in light of our partial results. \ The following problems are proposed as refined versions of the conjecture, whose answers would significantly contribute to a complete characterization of block Toeplitz operators with finite rank self-commutator.
 
\begin{problem}
Let $\Phi = \Phi_{+^{\prime}} + \Phi_0 + \Phi_{-}^{\ast} \in L^{\infty}(\mathbb{T}, M_n)$ be such that $\det \Phi_{+^\prime} \equiv 0$ and $T_\Phi$ is normal. \ Does there exist a finite Blaschke-Potapov product $B \in \mathcal{E}(\Phi)$ such that $\deg(\det B) = 0$?
\end{problem}
 
\begin{problem}
Let $\Phi \in L^{\infty}(\mathbb{T}, M_n)$ be a normal symbol and suppose that $\mathcal{E}(\Phi)$ contains a finite Blaschke-Potapov product $B$. \ Does it follow that 
 \begin{eqnarray*}
 \mathrm{Rank }[T_\Phi^{\ast}, T_\Phi] = \deg(\det B)?
 \end{eqnarray*}
 \end{problem}
 Motivated by Example \ref{C3S2EX8}, we pose the following problem:
 
 \begin{problem}
 Suppose $\Phi^{\ast}$ is not of bounded type such that there exists a row of $\Phi^{\ast}$ that contains no scalar-valued function which is not of bounded type. \ If $T_\Phi$ is a hyponormal block Toeplitz operator with finite rank self-commutator, does there exist a finite Blaschke-Potapov product $B \in \mathcal{E}(\Phi)$ such that
 \begin{eqnarray*}
 \deg(\det B) = \mathrm{Rank}[T_\Phi^{\ast}, T_\Phi]?
 \end{eqnarray*}
 \end{problem}
 
 \begin{problem}
 Suppose that $\Phi \in L^{\infty}(\mathbb{T}, M_n)$ is such that $T_\Phi$ is a hyponormal operator with finite rank self-commutator $[T_\Phi^{\ast}, T_\Phi]$. \ If $\Phi^{\ast}$ is of bounded type, does there exist a finite Blaschke-Potapov product $B \in \mathcal{E}(\Phi)$ such that
 \begin{eqnarray*}
 \mathrm{Rank}[T_\Phi^{\ast}, T_\Phi] = \deg(\det B)?
 \end{eqnarray*}
 \end{problem}

\vskip 1cm

\section{Declarations}
\subsection{Funding} 
M. Abhinand is supported by the Junior Research Fellowship, University Grant commission, Government of India. \ R. Curto is partially supported by U.S. National Science Foundation grant DMS-2247167. \  T. Prasad is supported in part by the Mathematical Research Impact Centric Support, MATRICS (MTR/2021/000373) by SERB, Department of Science and Technology (DST), Government of India.

\subsection{Conflicts of interest/competing interests} 

{\bf Non-financial interests}: \ The authors have no competing interests to declare that are relevant to the content of this article.

\subsection{Data availability.}
All data generated or analyzed during this study are included in this article.


\end{document}